\newcommand{\bd}{\mathbf{D}}
\newcommand{\1}{{\bf 1}}
\newcommand{\lp}{\left(}
\newcommand{\rp}{\right)}
\newcommand{\lc}{\left[}
\newcommand{\rc}{\right]}
\newcommand{\ga}{\gamma}
\newcommand{\beq}{\begin{equation}}
\newcommand{\eeq}{\end{equation}}
\newcommand{\bea}{\begin{eqnarray}}
\newcommand{\eea}{\end{eqnarray}}
\newcommand{\beas}{\begin{eqnarray*}}
\newcommand{\eeas}{\end{eqnarray*}}
\def\me{{\mathbb  E}}
\def\mr{{\mathbb  R}}
\newcommand{\cac}{{\mathcal C}}
\newcommand{\ch}{{\mathcal H}}
\newcommand{\cn}{{\mathcal N}}
\newcommand{\IP}{{\mathbb P}}
\newcommand{\IE}{{\mathbb E}}
\newcommand{\IR}{{\mathbb R}}
\newcommand{\D}{{\mathbb D}}
\newcommand{\EE}{{\mathbb E}}
\newcommand{\R}{{\mathbb R}}
\newtheorem{theorem}{Theorem}[section]
\newtheorem{corollary}[theorem]{Corollary}
\newtheorem{lemma}[theorem]{Lemma}
\newtheorem{proposition}[theorem]{Proposition}
\theoremstyle{remark}
\newtheorem{remark}[theorem]{Remark}
\theoremstyle{remark}
\newtheorem{example}[theorem]{Example}
\theoremstyle{remark}
\newtheorem{foo}[theorem]{Remarks}
\def\me{{\mathbb  E}}
\def\mr{{\mathbb  R}}
\numberwithin{equation}{section}
\title{Local times of stochastic differential equations driven by fractional Brownian motions}
\author{Shuwen Lou\thanks{Dept. Mathematics, Statistics and Computer Science, University of Illinois at Chicago, Chicago, IL 60607. Email: slou@uic.edu} }
\author{Cheng Ouyang \thanks{Dept. Mathematics, Statistics and Computer Science, University of Illinois at Chicago, Chicago, IL 60607. Email: couyang@math.uic.edu. CO's research is supported in part by  Simons grant \#355480.}}
\affil{University of Illinois at Chicago}
\begin{document}

\maketitle


\begin{abstract}
In this paper, we study the existence and (H\"{o}lder) regularity of local times of stochastic differential equations driven by fractional Brownian motions. In particular, we show that in one dimension and in the rough case $H<1/2$, the H\"{o}lder exponent (in $t$) of the local time is $1-H$, where $H$ is the Hurst parameter of the driving fractional Brownian motion.
\end{abstract}

{\bf Keywords:} local time, fractional Brownian motion, stochastic differential equation


\section{Introduction}
In this paper, we consider the following stochastic differential equation (SDE)
\begin{align}\label{sde-intro}
X_t=x+\int_0^t V_0(X_s)ds+\sum_{i=1}^d \int_0^t V_i(X_s)dB^i_s,\quad t\in[0,T],
\end{align}
where $x \in \mathbb{R}^d$, $V_0,V_1,\cdots,V_d$ are $C^\infty$-bounded  vector fields on $\mr^d$ and $\{B_t\}_{0\leq t\leq T}$ is a $d$-dimensional  fractional Brownian motion with Hurst parameter $H\in(1/4,1)$. Throughout our discussion, we assume that the vector fields $V_i$ satisfy the uniform elliptic condition. When $H\in(1/2,1)$, the above equation is understood in Young's sense. When $H\in(1/4,1/2)$,  stochastic integrals in equation (\ref{sde-intro}) are  interpreted as rough path integrals (see, e.g., ~\cite{FV-bk, Gu}) which extends the Young's integral. Existence and uniqueness of solutions to the above equation can be found, for example, in \cite{LQ}. In particular, when $H=\frac{1}{2}$, this notion of solution coincides with the solution of the corresponding Stratonovitch stochastic differential equation.  It is also clear now (cf. \cite{BH,CF,CLL,H-P}) that under H\"{o}rmander's condition the law of the solution $X_t$ has a smooth density with respect to 
the Lebesgue measure on $\mathbb{R}^d$. 

We are interested in the existence and regularity of local times of the solution $X$ to equation (\ref{sde-intro}). For a  $d$-dimensional fractional Brownian motion $B$ itself, its local time has been studied intensively under the framework of Gaussian random fields and is now well-understood  (see, e.g., \cite{Berman2}, \cite{GH} and \cite{BYOZ-book}).    The challenge to investigae local times of  $X$  is that it is not a Gaussian process in general. Many tools developed for  Gaussian random fields can not be directly applied. For example, the Fourier transform of the law of $X$ is not easy to analyze in such a non-Gaussian setting.   

Our approach relies on a sharp estimate of the joint density of finite distributions of $X$ (Theorem \ref{multi point density} below). Essentially, this density estimate plays a similar role to the ``local nondeterminism'' condition that is often used in the content of Gaussian random fields.  The main result of our investigation is summarized as follows.

Fix any small positive number $a$. Let $L(t,x)$ be the local time (occupation density) of $X$ up to time $t$ and $L^a(t,x)$ the occupation density of $X$ over the time interval $[a,t]$.  Define the pathtwise H\"{o}lder exponent of $L^a(\cdot, x)$  by
\begin{equation}\label{def-pointwise-holder}
\alpha(t)=\sup \left\{ \alpha>0, \limsup_{\delta\rightarrow 0}\sup_{x\in \IR^d}\frac{L^a(t+\delta, x)-L^a(t, x)}{\delta^\alpha}=0 \right\}.
\end{equation}

\begin{theorem} Let $X$ be the solution to equation (\ref{sde-intro}).

\begin{itemize}
\item [(1)]\ \  When $dH<1$,  the local time $L(t,x)$ of $X$ exists almost surely for any fixed $t$.
\item [(2)]\ \  Assume $d=1$ and $1/4<H<1/2$. There exists a version of $L^a(t,x)$ that is jointly continuous in $(t,x)$. Moreover, for any $\beta<1-H$, $L^a(t,x)$ is $\beta$-H\"{o}lder continuous in $t$, uniformly in $x$. And its pathwise H\"{o}lder exponent is given by
\begin{equation*}
\alpha(t)=1-H, \quad \mathrm{a.s.}\quad  \text{for all }t\in[a,T].
\end{equation*}
\end{itemize}
\end{theorem}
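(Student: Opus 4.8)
My plan is to derive everything from the multi-point density bound of Theorem~\ref{multi point density}, which I will use — after passing to increment variables — as a non-Gaussian substitute for strong local nondeterminism: for $a\le s_1<\dots<s_n\le T$ the joint density of $(X_{s_1},\dots,X_{s_n})$ is dominated by $C^n\prod_{j=2}^n(s_j-s_{j-1})^{-dH}$ times Gaussian densities in the consecutive differences $x_j-x_{j-1}$, the $s_1$-marginal contributing only a bounded factor since $s_1\ge a>0$. With this in hand the proof follows the classical route for local times of random fields.

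For part (1) I would show that $L(t,\cdot)$ exists as an $L^2(\mathbb{R}^d,dx)$ random variable, for which it suffices that $\int_0^t\int_0^t p_{X_s-X_r}(0)\,ds\,dr<\infty$, where $p_{X_s-X_r}$ is the density of the increment $X_s-X_r$; this double integral equals, up to a universal constant, $\mathbb{E}\big[\int_{\mathbb{R}^d}L(t,x)^2\,dx\big]$ (Parseval and Fourier inversion). Applying the density bound to the pair $(X_r,X_s)$ and integrating over the diagonal gives $p_{X_s-X_r}(0)\le C|s-r|^{-dH}$ uniformly on $[0,T]^2$, and $\int_0^T\int_0^T|s-r|^{-dH}\,ds\,dr<\infty$ exactly when $dH<1$, which settles (1).

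For part (2), with $d=1$ and $1/4<H<1/2$, I would start from $L^a(t,x)=\frac{1}{2\pi}\int_{\mathbb{R}}e^{-i\xi x}\int_a^t e^{i\xi X_s}\,ds\,d\xi$ and the elementary bound $|e^{-i\xi x}-e^{-i\xi y}|\le 2^{1-\gamma}|\xi|^\gamma|x-y|^\gamma$, noting that by Fourier inversion the density estimate is equivalent to $\big|\mathbb{E}[e^{i\sum_j\xi_j X_{s_j}}]\big|\le C^n\exp\!\big(-c\sum_j(s_j-s_{j-1})^{2H}\eta_j^2\big)$ with $\eta_j=\xi_j+\dots+\xi_n$. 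Expressing the $n$-th moments of the increments of $L^a$ in $t$ and in $x$ as integrals over $[t,t+h]^n$ (resp.\ $[a,t]^n$) and over $\mathbb{R}^n$, bounding the integrand by this estimate, performing the Gaussian integrals in $\xi$ and then the Dirichlet integral in the time-increments (where having $a>0$ removes the singularity from the first gap), I expect to obtain
\[
\mathbb{E}\big[|L^a(t+h,x)-L^a(t,x)|^n\big]\le C_n\,h^{\,n-(n-1)H},\qquad
\mathbb{E}\big[|L^a(t,x)-L^a(t,y)|^n\big]\le C_n\,|x-y|^{n\gamma}
\]
for any $\gamma<1/H-1$ (so $\gamma=1$ is allowed, because $2H<1$), with the first bound holding verbatim for $\sup_x|L^a(t+h,x)-L^a(t,x)|$. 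Kolmogorov's continuity theorem — jointly in $(t,x)$, and in its Banach-space-valued form for the $\sup_x$ estimate in $t$ — then produces a jointly continuous version of $L^a$ that is $\beta$-H\"older in $t$, uniformly in $x$, for every $\beta<\big(n-(n-1)H-1\big)/n=(1-H)-(1-H)/n$; since $n$ may be taken arbitrarily large this covers all $\beta<1-H$, and in particular $\alpha(t)\ge 1-H$ almost surely, simultaneously for all $t$.

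To get the matching bound $\alpha(t)\le 1-H$ I would drop the density estimates and use the path regularity of $X$ directly. For $t\in[a,T)$ and $\delta$ small, the occupation measure of $X$ over $[t,t+\delta]$ has total mass $\delta$, is carried by the interval $[X_t-R_\delta,\,X_t+R_\delta]$ with $R_\delta=\sup_{t\le s\le t+\delta}|X_s-X_t|$, and has density $g_\delta(x)=L^a(t+\delta,x)-L^a(t,x)$, so $\delta\le 2R_\delta\,\sup_x g_\delta(x)$. Since $X$ is almost surely $\gamma$-H\"older on $[0,T]$ for every $\gamma<H$, $R_\delta\le K_\gamma(\omega)\,\delta^\gamma$, hence $\sup_x g_\delta(x)\ge \delta^{1-\gamma}/(2K_\gamma(\omega))$, which forces $\limsup_{\delta\to0}\sup_x g_\delta(x)/\delta^\alpha=\infty$ whenever $\alpha>1-\gamma$; thus $\alpha(t)\le 1-\gamma$, and letting $\gamma\uparrow H$ gives $\alpha(t)\le 1-H$ almost surely for all $t$, so $\alpha(t)=1-H$. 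The hard part will be the first half: converting Theorem~\ref{multi point density} into the characteristic-function bound and then pushing the Dirichlet/Gamma computation to the sharp exponent $n-(n-1)H$, as well as the uniform-in-$x$ control that the $\sup_x$ moment bound requires; the lower bound in the last step is short once one has the idea of estimating the local time from below through the diameter of the range of $X$.
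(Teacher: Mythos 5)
Your overall architecture matches the paper's (moment estimates from the multi-point density bound, a Kolmogorov-type theorem, and the ``mass $\delta$ over an interval of length $\sup|X_u-X_s|$'' argument for the upper bound on $\alpha(t)$, which is essentially identical to the paper's proof of Theorem \ref{main 1}). But there is a genuine gap at the single hardest point: the uniform-in-$x$ H\"older continuity in $t$. You assert that the bound $\IE|L^a(t+h,x)-L^a(t,x)|^n\le C_nh^{n-(n-1)H}$ ``holds verbatim'' for $\sup_x|L^a(t+h,x)-L^a(t,x)|$ and defer to a Banach-space-valued Kolmogorov theorem. It does not hold verbatim, and no one-parameter Kolmogorov argument will produce it from pointwise-in-$x$ moments. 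What is actually required is a moment bound on the \emph{rectangular} increment $L^a([t,t+h],x)-L^a([t,t+h],y)$ that is simultaneously small in $|x-y|$ and in $h$ --- this is the paper's Lemma \ref{proving-uniform-holder}, $\IE[\,\cdot\,]^n\le C|x-y|^{1+\delta}|h|^{1+(1-H)(n-3)}$, whose proof requires a second-order difference of the joint density in the $(z_1,z_2)$ variables and a careful splitting according to whether $(u_3-u_2)^H\lessgtr|x-y|$ --- fed into a genuinely two-parameter continuity theorem (Berman's Theorem \ref{Berman}, condition \eqref{127306}). You flag ``uniform-in-$x$ control'' as the hard part, but the proposal neither identifies the mixed estimate as the missing lemma nor indicates how to prove it, so the central claim of part (2) is unsupported. (You would also need to handle the sup over the unbounded range $x\in\IR$; the paper does this by exhausting $\IR$ with compacts $I_{mR}$ and using that $L^a(t,\cdot)$ has compact support.)

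Two secondary points. First, your claimed ``equivalence'' between the density bound of Theorem \ref{multi point density} and a Gaussian bound $|\IE e^{i\sum\xi_jX_{s_j}}|\le C^n\exp(-c\sum(s_j-s_{j-1})^{2H}\eta_j^2)$ is false as stated: pointwise bounds on a density and its derivatives yield only polynomial decay $|\hat p(\eta)|\lesssim\min(1,((s_j-s_{j-1})^H|\eta_j|)^{-k})$ via $|\eta|^k|\hat p|\le\int|\partial^kp|$, and the exponential factor in the theorem is sub-Gaussian of order $|\xi|^{2\gamma}$, $\gamma<H$, not Gaussian. The polynomial version suffices for your $\xi$-integrals, but the step needs to be done honestly; the paper avoids characteristic functions altogether and manipulates the densities directly, which is the cleaner route in this non-Gaussian setting. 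Relatedly, the inequality $|e^{-i\xi x}-e^{-i\xi y}|\le2^{1-\gamma}|\xi|^\gamma|x-y|^\gamma$ holds only for $\gamma\le1$, so your claimed space-increment exponent $n\gamma$ for all $\gamma<1/H-1>1$ is not obtainable this way (though $\gamma=1$ is enough for the theorem). Second, in part (1) the constants in Theorem \ref{multi point density} depend on the lower cutoff $a>0$, so your bound $p_{X_s-X_r}(0)\le C|s-r|^{-dH}$ is not ``uniform on $[0,T]^2$''; the paper resolves this by proving existence on $[a,t]$ and passing to $a\downarrow0$ by monotone convergence, a step your argument also needs.
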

Finally, let us briefly explain why we have  to impose the technical assumption $d=1$ and $1/2<H<1/4$ for the H\"{o}lder regularity of $L^a(t,x)$. In order to establish the continuity of $L^a(t,x)$ in the space variable, the natural approach is to provide an upper bound for
$$\IE|L^a(t,x)-L^a(t,y)|^n.$$
Our observation is that the above quantity can be bounded from above by
$$|x-y|^n\int_{[a,T]^n} |\partial^n_xp_{t_1,...,t_n}(x_1,...,x_n)|dx_1,...,dx_n,$$
where $p_{t_1,...,t_n}(x_1,...,x_n)$ is the joint density of $(X_{t_1},...,X_{t_n})$. 
In general, one needs $n>d$ in order to conclude the continuity of $L^a(t,x)$ in $x$. However, as one can see from Theorem \ref{multi point density}, large $n$ and $H$ tend to blow up the time integral above.  Consequently, we have to restrict our discussion to the assumption $n=2$ and $(1+d)H<1$, i.e., $d=1$ and $H<1/2$.

The rest of the paper is organized as follows. In Section 2, we introduce some basic tools for analyzing SDEs driven by fractional Brownian motions. In particular, we establish the key estimate for the joint density of $(X_{t_1},...,X_{t_n})$, which enables us to establish both the existence of local time in Section 2 and the regularity of the local time in Section 3.

\section{SDEs driven by fractional Brownian motions}
In this section, we present some tools for analyzing SDEs driven by fractional Brownian motions which will be needed for the remainder of the paper. 

Let $B=\{B_t=(B^1_t,\ldots,B^d_t), \; t\in [0,T]\}$ be a $d$-dimensional fractional Brownian motion with Hurst
parameter $H\in(0,1)$. That is,  $B$ is a centered Gaussian process whose covariance structure is induced by
\begin{align}\label{covariance}
R\left( t,s\right) :=\EE  B_s^i \, B_t^j
=\frac{1}{2}\left( s^{2H}+t^{2H}-|t-s|^{2H}\right)\delta_{ij},
\quad
i, j=1,\ldots,d.
\end{align}
It can be shown, by a standard application of Kolmogorov's criterion, that $B$ admits a continuous version
whose paths are $\ga$-H\"older continuous for any $\ga<H$.

\smallskip

\subsection{Malliavin calculus} We introduce the basic framework of Malliavin calculus in this subsection.  The reader is invited to read the corresponding chapters in  \cite{Nu06} for further details. Let $\mathcal{E}$ be the space of $\mathbb{R}^d$-valued step
functions on $[0,1]$, and $\mathcal{H}$  the closure of
$\mathcal{E}$ for the scalar product:
\[
\langle (\mathbf{1}_{[0,t_1]} , \cdots ,
\mathbf{1}_{[0,t_d]}),(\mathbf{1}_{[0,s_1]} , \cdots ,
\mathbf{1}_{[0,s_d]}) \rangle_{\mathcal{H}}=\sum_{i=1}^d
R(t_i,s_i).
\]
 $\ch$ is the reproducing kernel Hilbert space for $B$. 

Some isometry arguments allow to define the Wiener integral $B(h)=\int_0^{1} \langle h_s, dB_s \rangle$ for any element $h\in\ch$, with the additional property $\EE[B(h_1)B(h_2)]=\langle h_1,\, h_2\rangle_{\ch}$ for any $h_1,h_2\in\ch$.
  A $\mathcal{F}$-measurable real
valued random variable $F$ is said to be cylindrical if it can be
written, for a given $n\ge 1$, as
\begin{equation*}
F=f\lp  B(\phi^1),\ldots,B(\phi^n)\rp,
\end{equation*}
where $\phi^i \in \mathcal{H}$ and $f:\mathbb{R}^n \rightarrow
\mathbb{R}$ is a $C^{\infty}$ bounded function with bounded derivatives. The set of
cylindrical random variables is denoted by $\mathcal{S}$.

The Malliavin derivative is defined as follows: for $F \in \mathcal{S}$, the derivative of $F$ is the $\mathbb{R}^d$ valued
stochastic process $(\mathbf{D}_t F )_{0 \leq t \leq 1}$ given by
\[
\mathbf{D}_t F=\sum_{i=1}^{n} \phi^i (t) \frac{\partial f}{\partial
x_i} \left( B(\phi^1),\ldots,B(\phi^n)  \right).
\]
More generally, we can introduce iterated derivatives by
$
\mathbf{D}^k_{t_1,\ldots,t_k} F = \mathbf{D}_{t_1}
\ldots\mathbf{D}_{t_k} F.
$
\ For any $p \geq 1$,  we denote by
$\mathbb{D}^{k,p}$ the closure of the class of
cylindrical random variables with respect to the norm
\[
\left\| F\right\| _{k,p}=\left( \mathbb{E}\left( F^{p}\right)
+\sum_{j=1}^k \mathbb{E}\left( \left\| \mathbf{D}^j F\right\|
_{\mathcal{H}^{\otimes j}}^{p}\right) \right) ^{\frac{1}{p}},
\]
and
\[
\mathbb{D}^{\infty}=\bigcap_{p \geq 1} \bigcap_{k
\geq 1} \mathbb{D}^{k,p}.
\]

Let $F=(F^1,\ldots , F^n)$ be a random vector whose components are in $\mathbb{D}^\infty$. Define the Malliavin matrix of $F$ by
$$\gamma_F=(\langle \mathbf{D}F^i, \mathbf{D}F^j\rangle_{\ch})_{1\leq i,j\leq n}.$$
Then $F$ is called  {\it non-degenerate} if $\gamma_F$ is invertible $a.s.$ and
$$(\det \gamma_F)^{-1}\in \cap_{p\geq1}L^p(\Omega).$$
\noindent
It is a classical result that the law of a non-degenerate random vector admits a smooth density with respect to the Lebesgue measure on $\mr^n$. 

It is well-known that for a fractional Brownian motion $B$, there is an underlying Wiener process $W$  such that
\begin{align*}
B_t=\int_0^tK_H(t,s)dW_s,
\end{align*}
where $K(t,s)$ is a deterministic kernel whose expression is explicit.  Based on the above representation, one can consider fractional Brownian motions and hence functionals of fractional Brownian motions as functionals of the underlying Wiener process $W$. This observation allows us to perform Malliavin calculus with respect to the Wiener process $W$. We shall perform Malliavin calculus with respect to both $B$ and $W$. In order to distinguish them, the Malliavin derivatives (and corresponding Sobolev spaces, respectively)  with respect to $W$ will be denoted by $D$  (and by $D^{k,p}$, respectively).
The relation between the two operators $\mathbf{D}$ and $D$  is given by the following (see e.g. \cite[Proposition 5.2.1]{Nu06}).

\begin{proposition} 
Let ${D}^{1,2}$ be the Malliavin-Sobolev space corresponding to
the Wiener process $W$. Then $\D^{1,2}={D}^{1,2}$, and for
any $F\in {D}^{1,2}$ we have ${\mathrm D} F = K^{*} \bd F$
,where $K^*$ is the isometry between $L^2([0,T])$ and $\mathcal{H}$. 
\end{proposition}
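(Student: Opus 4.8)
The plan is to prove the identity first for smooth cylindrical functionals and then propagate it to all of $D^{1,2}$ by a density/closure argument, using throughout that $K^*$ is an isometry. The starting point is the transfer principle for the two Wiener integrals: since $B_t=W(K_H(t,\cdot))=W(K^*\mathbf{1}_{[0,t]})$ and $K^*$ is linear, one has $B(h)=W(K^*h)$ for every $h\in\mathcal{H}$. Consequently, any $F=f(B(\phi^1),\dots,B(\phi^n))\in\mathcal{S}$ can be rewritten as $F=f(W(\psi^1),\dots,W(\psi^n))$ with $\psi^i:=K^*\phi^i\in L^2([0,T])$, so $F$ is also a smooth cylindrical functional of $W$ and in particular $\mathcal{S}\subset D^{1,2}$.

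The second step is to compute both Malliavin derivatives by the chain rule. The derivative with respect to $B$ is $\mathbf{D}_tF=\sum_{i=1}^n\partial_if(B(\phi^1),\dots,B(\phi^n))\,\phi^i(t)$, an $\mathcal{H}$-valued random variable, while the derivative with respect to $W$ is $D_sF=\sum_{i=1}^n\partial_if(W(\psi^1),\dots,W(\psi^n))\,\psi^i(s)=\sum_{i=1}^n\partial_if(B(\phi^1),\dots,B(\phi^n))\,(K^*\phi^i)(s)$. The scalar prefactors are identical, so linearity of $K^*$ gives $DF=K^*(\mathbf{D}F)$ for every $F\in\mathcal{S}$, and the isometry property of $K^*$ then yields $\|\mathbf{D}F\|_{\mathcal{H}}=\|DF\|_{L^2([0,T])}$, i.e. the graph norms $\|\cdot\|_{1,2}$ and $\|\cdot\|_{D^{1,2}}$ agree on $\mathcal{S}$.

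It remains to upgrade this to the two closures. One direction is immediate from the first step: $\mathcal{S}\subset D^{1,2}$ and, by the norm identity, any $\|\cdot\|_{1,2}$-Cauchy sequence in $\mathcal{S}$ is $\|\cdot\|_{D^{1,2}}$-Cauchy, so $\mathbb{D}^{1,2}\subset D^{1,2}$ with $DF=K^*\mathbf{D}F$ on $\mathbb{D}^{1,2}$. For the reverse inclusion I would show that $\mathcal{S}$ is $\|\cdot\|_{D^{1,2}}$-dense in $D^{1,2}$; by continuity of $D$ this reduces to the density of $K^*\mathcal{H}$ in $L^2([0,T])$, which in turn follows from the invertibility of the fractional kernel $K_H$ (the Gaussian spaces of $B$ and $W$ coincide, hence $\overline{\mathrm{span}}\{K_H(t,\cdot):t\in[0,T]\}=\overline{\mathrm{span}}\{\mathbf{1}_{[0,t]}:t\in[0,T]\}=L^2([0,T])$). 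Given a general $F\in D^{1,2}$ and $F_m\in\mathcal{S}$ with $F_m\to F$ in $\|\cdot\|_{D^{1,2}}$, the norm identity forces $\mathbf{D}F_m$ to be Cauchy in $\mathcal{H}$, whence $F\in\mathbb{D}^{1,2}$ and, passing to the limit in $DF_m=K^*\mathbf{D}F_m$, we get $DF=K^*\mathbf{D}F$.

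The only ingredient that is not a soft manipulation is the last one — the density of $K^*\mathcal{H}$ in $L^2([0,T])$, equivalently a description of the range of $K^*$ — and with it the bookkeeping of the two explicit formulas for $K^*$ in the regimes $H<1/2$ and $H>1/2$; once that is granted, the rest is the routine ``chain rule plus isometry plus density'' scheme. Alternatively, one may simply \emph{define} $\mathbb{D}^{1,2}$ as the closure of $\mathcal{S}$ taken inside $D^{1,2}$, in which case the identity $DF=K^*\mathbf{D}F$ extends from $\mathcal{S}$ by continuity with nothing further to prove.
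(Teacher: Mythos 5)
Your argument is correct and is essentially the standard proof of this transfer principle; the paper itself offers no proof, simply citing Nualart's Proposition 5.2.1, whose argument is exactly the one you reconstruct (rewrite cylindrical functionals of $B$ as cylindrical functionals of $W$ via $B(h)=W(K^*h)$, apply the chain rule, use the isometry to identify the graph norms, and close up). You also correctly isolate the one non-soft ingredient, namely that $K^*\mathcal{H}$ is dense (hence, being the isometric image of a complete space, equal to) $L^2([0,T])$, which rests on the invertibility of the kernel $K_H$, i.e.\ on $B$ and $W$ generating the same first Wiener chaos.
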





It is known that $B$ and the Wiener process $W$ generate the same filtration which we denote by $\{\mathcal{F}_t; \, t\in[0,1]\}$. Set $L^{2}_{t}\equiv L^{2}([t,1])$ and $\mathbb{E}_t=\mathbb{E}( \cdot\, | \mathcal{F}_t)$. For a random variable $F$ and $t\in [0,1]$, define,  for $k\ge 0$ and $p>0$, the conditional Sobolev norm
\begin{equation*}
\|F\|_{k,p;t}=\left( \mathbb{E}_t\left[
F^{p}\right]
+\sum_{j=1}^k \mathbb{E}_t\left[ \left\| {D}^j F\right\|
_{(L^{2}_{t})^{\otimes j}}^{p}\right] \right) ^{\frac{1}{p}}.
\end{equation*}
By convention, when $k=0$ we always write $\|F\|_{p;t}=\|F\|_{0,p;t}$.
The conditional Malliavin matrix of $F$ is given by
\begin{align}\label{eq:def-conditional-matrix}
{\Gamma}_{F,t}=
\lp\langle D F^i, D F^j\rangle_{L^{2}_{t}}\rp_{1\leq i,\,j\leq n}.
\end{align}

\smallskip
The following is a conditional version of Proposition 2.1.4 of \cite{Nu06}.
\begin{proposition} \label{prop:int-parts-cond-W}
Fix $k\geq 1$. Let $F=(F^1,...,F^n)$ be a random vector and $G$ a random variable. Assume both $F$ and $G$ are smooth in the Malliavin sense and
$(\det{{\Gamma}_{F,s}})^{-1}$ has finite moments of
all orders.  Then for any
multi-index
$\alpha\in\{1,\,\ldots,\,n\}^k, k\geq1$,
there exists an element ${H}^s_\alpha(F,G)\in\cap_{p\geq
1}\cap_{m\geq 0} {D}^{m,p}$ such that
\begin{equation*}
\EE_s\lc(\partial_\alpha \varphi)(F) \, G\rc
=
\EE_s\lc \varphi(F) \, {H}_\alpha^s(F,G)\rc,\quad\ \   \varphi\in\cac_p^\infty(\mr^d),
\end{equation*}
where ${H}_\alpha^s(F,G)$ is
recursively defined by
\begin{equation*}
{H}_{(i)}^s(F,G)=\sum_{j=1}^n {\delta}_s\lp
G\lp{{\Gamma}}_{F,s}^{-1}\rp_{ij}D F^j\rp,
\quad
{H}_{\alpha}^s(F,G)={H}^s_{(\alpha_k)}(F,{H}^s_{(\alpha_1,\,\ldots,\,\alpha_{k-1})}(F,G)).
\end{equation*}
Here ${\delta}_s$ denotes the Skorohod integral with respect
to the Wiener process $W$ on the interval $[s,1]$. (See \cite[Section 1.3.2]{Nu06} for a detailed account of the definition of $\delta_s$.) 
Furthermore, the following norm estimate holds true:
\begin{equation*}
\Vert  H_{\alpha}^{s}(F,G) \Vert_{p;s} \leq 
c_{p,q} \Vert \Gamma_{F,s}^{-1}  \, D F\Vert^k_{k, 2^{k-1}r;s}\Vert G\Vert^k_{k, q;s},
\end{equation*}
 where $\frac1p=\frac1q+\frac1r$.
\end{proposition}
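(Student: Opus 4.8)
The plan is to transcribe the proof of the unconditional integration-by-parts formula (Proposition 2.1.4 of \cite{Nu06}), performing every manipulation inside the conditional expectation $\EE_s$ and replacing the global Skorohod integral by $\delta_s$, the Skorohod integral of $W$ over $[s,1]$, whose natural Hilbert space is $L^2_s=L^2([s,1])$. Two preliminary facts are needed. First, the \emph{conditional duality relation}: if $H$ is smooth in the Malliavin sense (with respect to $W$) and $u$ lies in the domain of $\delta_s$, then
\[
\EE_s\big[\langle D H,\,u\rangle_{L^2_s}\big]=\EE_s\big[H\,\delta_s(u)\big].
\]
This is essentially the defining property of $\delta_s$; it can be deduced from the unconditional duality by approximating $u$ by $\mathcal F_s$-measurable simple processes supported on $[s,1]$ and passing to the limit. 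Second, the entries of $\Gamma_{F,s}^{-1}$ are themselves smooth in the Malliavin sense: this follows from the hypothesis $(\det\Gamma_{F,s})^{-1}\in\cap_{p\ge1}L^p(\Omega)$ together with the classical localization argument (approximate $x\mapsto 1/x$ by smooth bounded functions on $(0,\infty)$ and use $D(\Gamma_{F,s}^{-1})=-\Gamma_{F,s}^{-1}(D\Gamma_{F,s})\Gamma_{F,s}^{-1}$), exactly as in the unconditional setting, the conditioning playing no role in this step.

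For $k=1$, the chain rule gives $D(\varphi(F))=\sum_{i=1}^n(\partial_i\varphi)(F)\,DF^i$ as an $L^2_s$-valued random variable; taking the $L^2_s$-inner product with $DF^j$ and inverting the conditional Malliavin matrix yields
\[
(\partial_i\varphi)(F)=\sum_{j=1}^n\langle D(\varphi(F)),\,DF^j\rangle_{L^2_s}\,(\Gamma_{F,s}^{-1})_{ji}.
\]
Multiplying by $G$, summing over $j$, and applying the conditional duality relation to $u=\sum_j G(\Gamma_{F,s}^{-1})_{ji}DF^j$ gives the desired identity with $H^s_{(i)}(F,G)=\sum_j\delta_s\big(G(\Gamma_{F,s}^{-1})_{ij}DF^j\big)$. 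The general multi-index case is obtained by induction on $k$: once $H^s_{(\alpha_1,\dots,\alpha_{k-1})}(F,G)$ has been constructed and shown to be smooth in the Malliavin sense (which it is, since $\delta_s$ maps smooth processes to smooth random variables and the integrand is a product of smooth factors), apply the $k=1$ identity to the function $\partial_{(\alpha_1,\dots,\alpha_{k-1})}\varphi$ with $G$ replaced by $H^s_{(\alpha_1,\dots,\alpha_{k-1})}(F,G)$, which reproduces the stated recursion and simultaneously shows $H^s_\alpha(F,G)\in\cap_{p\ge1}\cap_{m\ge0}D^{m,p}$.

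For the norm estimate I would invoke the conditional Meyer inequality for $\delta_s$, namely $\|\delta_s(u)\|_{p;s}\le c_p\|u\|_{1,p;s}$ (with $\|u\|_{1,p;s}$ computed for the $L^2_s$-valued $u$), and then bound $\|u\|_{1,p;s}$ for $u=\sum_j G(\Gamma_{F,s}^{-1})_{ji}DF^j$ by the Leibniz rule for $D$ together with the conditional Hölder inequality, distributing the derivative between the factor $G$ and the factor $\Gamma_{F,s}^{-1}DF$; iterating this estimate through the $k$ levels of the recursion and keeping track of how the Hölder exponents split produces exactly $\Vert\Gamma_{F,s}^{-1}\,DF\Vert^k_{k,2^{k-1}r;s}\Vert G\Vert^k_{k,q;s}$ with $\tfrac1p=\tfrac1q+\tfrac1r$, the exponent $2^{k-1}$ coming from the repeated doubling when the matrix/gradient factor is split off at each stage. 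The main obstacle is bookkeeping rather than conceptual: one must verify that all the conditional objects ($\delta_s$, the norms $\|\cdot\|_{k,p;s}$, and the conditional Meyer inequalities) behave exactly like their unconditional counterparts — in particular that the localization used to differentiate $\Gamma_{F,s}^{-1}$ is compatible with conditioning on $\mathcal F_s$ — and then chase the Hölder exponents carefully through the $k$-fold iteration so as to land on precisely $2^{k-1}r$ and $q$.
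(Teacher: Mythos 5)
The paper offers no proof of this proposition --- it is stated as a conditional version of Proposition 2.1.4 of \cite{Nu06} (and is essentially the conditional integration-by-parts formula established in \cite{BNOT}) --- and your reconstruction follows exactly the standard route one would use: conditional duality for $\delta_s$, the chain rule combined with inversion of $\Gamma_{F,s}$ for $k=1$, induction on the length of the multi-index, and the conditional Meyer inequality plus H\"older for the norm bound; this is correct in outline. Two minor caveats: the elementary processes used to justify the conditional duality are not $\mathcal F_s$-measurable (they are general smooth random variables times deterministic indicators of subintervals of $[s,1]$, and the identity holds because the increments of $W$ after time $s$ are independent of $\mathcal F_s$), and the $2^{k-1}r$ exponent bookkeeping in the final estimate is asserted rather than carried out, though the mechanism you describe (the exponent on the $\Gamma_{F,s}^{-1}\,DF$ factor doubling at each of the $k$ stages of the recursion) is indeed the correct one.
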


\bigskip

\subsection{SDEs driven by fractional Brownian motions and density estimate}
Consider the following stochastic differential equation driven by a fractional Brownian motion with Hurst parameter $H>1/4$,
\begin{align}\label{SDE}
X_t=x_0+\sum_{i=1}^d\int_0^tV_i(X_s)dB^i_s+\int_0^tV_0(X_s)ds,\quad\quad t\in[0,T].
\end{align}
Here $V_i, i=0,1,...,d$ are $C^\infty$-bounded vector fields on $\mr^d$ which form a uniform elliptic system.  


Recall that $D$ is the Malliavin derivative operator with respect to the underlying Wiener process $W$ and $\Gamma_{F,t}$ is defined in (\ref{eq:def-conditional-matrix}) for a random variable $F$. The following estimate is a restatement of Proposition 5.9 in  \cite{BNOT}.

\begin{lemma} \label{estimate bivar mall} Let $a \in (0,T)$, and consider $H\in(1/4,1)$. Then there exists a constant $C>0$ depending on $a$ such that for $ a \le s \le t \le T$ the following holds:
 \begin{eqnarray*}
 \Vert \Gamma_{X_{t}-X_{s},s}^{-1}   \Vert^d_{d, 2^{d+2};s} 
 &\le& 
 \frac{C}{(t-s)^{2dH}} \, \big(\mathbb{E}_s (1+ G)\big)^{\frac{d}{2^{d+2}}}  \\
 \Vert D(X_{t}-X_{s}) \Vert^d_{d, 2^{d+2};s} 
 &\le& 
 C (t-s)^{dH} \, \big(\mathbb{E}_s  (1+  G)\big)^{\frac{d}{2^{d+2}}},
\end{eqnarray*}
where $G$ is a random variable smooth in the Malliavin sense and has finite moments to any order.
\end{lemma}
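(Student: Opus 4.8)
The plan is to reduce both estimates to conditional Malliavin calculus bounds for $X_t$ itself --- essentially \cite[Proposition 5.9]{BNOT} --- by first trading the increment $X_t-X_s$ for $X_t$ on the interval $[s,1]$. Since $X_s$ is $\mathcal F_s$-measurable, $D_rX_s=0$ for a.e.\ $r\ge s$; hence on $[s,1]$ we have $D_r(X_t-X_s)=D_rX_t$, so that $\Gamma_{X_t-X_s,s}=\Gamma_{X_t,s}$ and $\|D(X_t-X_s)\|_{d,2^{d+2};s}=\|DX_t\,\mathbf 1_{[s,1]}\|_{d,2^{d+2};s}$. It therefore suffices to prove the two displayed inequalities with $X_t-X_s$ replaced by $X_t$.

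Next I would record the Malliavin derivative of the solution. Differentiating \eqref{SDE} along the driving path gives $\mathbf D_rX_t=J_tJ_r^{-1}V(X_r)\,\mathbf 1_{[0,t]}(r)$, where $V=(V_1,\dots,V_d)$ is the coefficient matrix and $J_t$ is the Jacobian of the stochastic flow of \eqref{SDE}; passing to the Wiener derivative via $DX_t=K^{*}\mathbf DX_t$ and iterating the chain rule, each $D^{k}X_t$ is a fixed polynomial expression in $J_t$, $J_r^{-1}$, the derivatives of the $V_i$ and the flow, with $K^{*}$ applied in each argument. All of these ingredients have conditional $L^p$-moments of every order, and the random variable $G$ is taken to dominate a fixed finite collection of such norms. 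The two structural facts I would use are: (i) $\mathbf D^kX_t$ vanishes unless all of its arguments are $\le t$, so its restriction to $[s,1]$ lives on $[s,t]$; and (ii) $K^{*}$ on a short interval obeys the two-sided bound $c(t-s)^{H}\le\|K^{*}\mathbf 1_{[s,t]}\|_{L^2([s,t])}$ and $\|K^{*}(g\,\mathbf 1_{[s,t]})\|_{L^2}\le C(t-s)^{H}\|g\|_{\infty}$, with $c,C$ depending only on $H$.

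For the upper bound, combining (i), (ii), boundedness of the $V_i$ and the conditional moment bounds on the flow shows that $DX_t\,\mathbf 1_{[s,1]}$ and its iterated Wiener derivatives have $(L^2_s)^{\otimes k}$-norms of order $(t-s)^{H}$, so $\|DX_t\,\mathbf 1_{[s,1]}\|_{d,2^{d+2};s}\le C(t-s)^{H}(\EE_s(1+G))^{1/2^{d+2}}$, and raising to the $d$-th power gives the second inequality. For the first, take $\xi\in\mathbb R^d$ with $|\xi|=1$ and note $\xi^{T}\Gamma_{X_t,s}\xi=\|\langle DX_t,\xi\rangle\|^2_{L^2([s,t])}$; the heart of the matter is a lower bound on $\det\Gamma_{X_t,s}$ of order $(t-s)^{2dH}$ in the appropriate conditional-moment sense, whose heuristic is that the leading term $V(X_s)^{T}\xi\cdot K^{*}\mathbf 1_{[s,t]}$ already has $L^2([s,t])$-norm at least $\sqrt{\lambda}\,c(t-s)^{H}$ by the uniform ellipticity $\inf_x\inf_{|\xi|=1}|V^{T}(x)\xi|^2\ge\lambda>0$ and fact (ii), the contributions of $J_tJ_r^{-1}-\id$ and $V(X_r)-V(X_s)$ on $[s,t]$ being controlled as in \cite{BNOT}. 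Granting this, $\|\Gamma_{X_t,s}^{-1}\|\le C(t-s)^{-2H}(1+G)^{1/2^{d+2}}$; since $D(\Gamma^{-1})=-\Gamma^{-1}(D\Gamma)\Gamma^{-1}$ while $D^{j}\Gamma$ again scales like $(t-s)^{2H}$, every term entering $\|\Gamma_{X_t,s}^{-1}\|_{d,2^{d+2};s}$ is of order $(t-s)^{-2H}$, and the $d$-th power yields the first inequality.

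I expect the main obstacle to be the lower bound in the last step: one must control the behaviour of the operator $K^{*}$ on a short interval --- genuinely singular when $H<1/2$ --- together with the fluctuations of $J_tJ_r^{-1}$ and of $V(X_r)$ over $[s,t]$, uniformly in $a\le s\le t\le T$ with $t-s$ possibly small, while keeping all the non-explicit randomness inside a single conditionally integrable $G$. This quantitative substitute for the ``local nondeterminism'' property of Gaussian fields is exactly \cite[Proposition 5.9]{BNOT}, to which I would appeal for the precise constants.
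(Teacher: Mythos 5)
The paper gives no proof of this lemma at all: it is stated verbatim as ``a restatement of Proposition 5.9 in \cite{BNOT}'' and simply cited. Your proposal is therefore consistent with the paper's treatment --- you correctly reduce $X_t-X_s$ to $X_t$ on $[s,1]$, outline the standard representation $\mathbf D_rX_t=J_tJ_r^{-1}V(X_r)\mathbf 1_{[0,t]}(r)$, and, like the paper, ultimately defer the genuinely hard step (the small-time lower bound on $\det\Gamma_{X_t,s}$ under uniform ellipticity) to \cite{BNOT}. One minor caution: your fact (ii), the bound $\Vert K^{*}(g\,\mathbf 1_{[s,t]})\Vert_{L^2}\le C(t-s)^{H}\Vert g\Vert_{\infty}$, fails for $H<1/2$, where membership in $\mathcal H$ requires H\"older (not merely sup-norm) control of $g$; the correct estimate carries a $C^\lambda$-norm with $\lambda>1/2-H$, which is how \cite{BNOT} proceeds, but this does not affect the overall structure of your argument.
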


The above lemma allows us to estimate the joint density of $(X_{t_1},...,X_{t_n})$, or equivalently the joint density of $(X_{t_1}, X_{t_2}-X_{t_1},..., X_{t_n}-X_{t_{n-1}})$.

\begin{theorem}\label{multi point density}
Fix $a\in(0,T)$ and $\gamma<H$. Let $\tilde{p}_{t_1,...,t_n}(\xi_1,...,\xi_n)$ be the joint density of the random vector $(X_{t_1}, X_{t_2}-X_{t_1},..., X_{t_n}-X_{t_{n-1}}),$   $a\leq t_1<\cdots<t_n\leq T$. For any non-negative integers $k_1,...,k_n$, there exists a constant $C_1, C_2>0$ (depending on $a$) such that 
\begin{align*}
&\partial^{k_1}_{\xi_1}\dots\partial^{k_n}_{\xi_n}\tilde{p}_{t_1,...,t_n}(\xi_1,...,\xi_n)\\
&\leq C_1\,\frac{1}{(t_2-t_1)^{(d+k_2)H}} e^{-\frac{|\xi_2|^{2\gamma}}{2|t_2-t_1|^{2\gamma^2}}}\dots \frac{1}{(t_{n}-t_{n-1})^{(d+k_n)H}} e^{-\frac{|\xi_n|^{2\gamma}}{C_2|t_n-t_{n-1}|^{2\gamma^2}} }.
\end{align*}
\end{theorem}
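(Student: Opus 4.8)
The plan is to prove the bound by induction on $n$, using the conditional integration-by-parts machinery of Proposition~\ref{prop:int-parts-cond-W} at each step to peel off one variable at a time, together with the estimates of Lemma~\ref{estimate bivar mall}. The key structural observation is that the density $\tilde p_{t_1,\dots,t_n}$ of $(X_{t_1}, X_{t_2}-X_{t_1},\dots,X_{t_n}-X_{t_{n-1}})$ can be expressed, for a test function $\varphi=\prod_{j}\varphi_j(\xi_j)$, via the formula
\begin{equation*}
\EE\big[\varphi_1(X_{t_1})\cdots\varphi_n(X_{t_n}-X_{t_{n-1}})\big]
=\EE\Big[\mathds{1}_{\{X_{t_1}\le\cdot\}}\,\EE_{t_1}\big[\varphi_2(X_{t_2}-X_{t_1})\cdots\big]\Big],
\end{equation*}
and then conditionally on $\mathcal F_{t_{n-1}}$ the increment $X_{t_n}-X_{t_{n-1}}$ has a conditional density in $\xi_n$ whose derivatives we control by Proposition~\ref{prop:int-parts-cond-W} applied with $F=X_{t_n}-X_{t_{n-1}}$, $G=1$, and multi-index of length $d+k_n$: this produces the weight $H^{t_{n-1}}_\alpha(F,1)$, and the tail factor $e^{-|\xi_n|^{2\gamma}/(C_2|t_n-t_{n-1}|^{2\gamma^2})}$ comes from combining the representation $\tilde p(\xi_n)=\EE_{t_{n-1}}[\mathds 1_{\{F>\xi_n\}}H^{t_{n-1}}(F,1)]$ with the Gaussian-type concentration of $F$ around $0$ at scale $(t_n-t_{n-1})^H$ (using $\gamma$-Hölder continuity of the driving fBm for any $\gamma<H$ plus boundedness of the vector fields, which gives $\PP(|X_{t_n}-X_{t_{n-1}}|>\xi_n\mid\mathcal F_{t_{n-1}})\lesssim\exp(-c|\xi_n|^{2\gamma}/|t_n-t_{n-1}|^{2\gamma^2})$).

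Concretely, the first step is to establish the one-step estimate: for a single increment over $[s,t]$ with $a\le s\le t$, the conditional density $q_{s,t}(\xi)$ of $X_t-X_s$ given $\mathcal F_s$ satisfies $|\partial_\xi^k q_{s,t}(\xi)|\le C(t-s)^{-(d+k)H}e^{-|\xi|^{2\gamma}/(C_2(t-s)^{2\gamma^2})}(\EE_s(1+G))^{c}$ with $c$ some fixed power. This follows by writing $\partial^k_\xi q_{s,t}(\xi)=\EE_s[\mathds 1_{\{X_t-X_s>\xi\}}H^s_\alpha(X_t-X_s,1)]$ for an appropriate $\alpha$ of length $k+1$ if one wants pointwise values, or more simply by taking $\varphi$ to be a bump and using the norm estimate in Proposition~\ref{prop:int-parts-cond-W}: $\|H^s_\alpha(F,1)\|_{p;s}\le c_{p,q}\|\Gamma_{F,s}^{-1}DF\|^{k+1}_{k+1,2^kr;s}$, and then bounding $\|\Gamma_{F,s}^{-1}DF\|_{\cdots}\le\|\Gamma_{F,s}^{-1}\|\cdot\|DF\|$ by Lemma~\ref{estimate bivar mall}, which yields exactly the power $(t-s)^{-2(d+k)H}\cdot(t-s)^{(d+k)H}=(t-s)^{-(d+k)H}$ after the two factors are multiplied (here one applies the lemma with dimension parameter $d+k$ rather than $d$, i.e.\ one uses the version of Proposition~5.9 of \cite{BNOT} with the relevant Sobolev order). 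The exponential tail is inserted by splitting $\{|\xi|\le(t-s)^\gamma\}$ versus its complement and on the far region using the Hölder tail bound on the increment together with Hölder's inequality to absorb the $H^s_\alpha$ factor.

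The inductive step then conditions successively: assuming the bound for $n-1$ variables with the conditional expectation $\EE_{t_1}$ in place of $\EE$ throughout, one writes $\tilde p_{t_1,\dots,t_n}(\xi_1,\dots,\xi_n)=\EE[\,\delta_{\xi_1}(X_{t_1})\,\tilde p^{(1)}_{t_2,\dots,t_n}(\xi_2,\dots,\xi_n)\,]$ where $\tilde p^{(1)}$ is the conditional joint density given $\mathcal F_{t_1}$, applies the $n-1$ bound inside, and the factor $1/(t_2-t_1)^{(d+k_2)H}$ with its tail emerges from the one-step estimate applied on $[t_1,t_2]$; the nuisance terms $\EE_{t_j}(1+G)$ generated at each stage have all moments uniformly and are absorbed into the constant at the end via Hölder's inequality (this is why $G$ is harmless). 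I expect the main obstacle to be bookkeeping of the Sobolev orders and the $r,q$ exponents through the iterated integration by parts so that the powers of $(t_{j+1}-t_j)$ come out to be exactly $(d+k_j)H$ rather than something larger, and relatedly making sure the constant genuinely depends only on $a$ and not on the $t_j$'s — this requires that Lemma~\ref{estimate bivar mall} be applied with a fixed lower endpoint $a$ and that the conditional moments of $G$ be controlled uniformly in $s\ge a$, which is where the ellipticity and smoothness of the $V_i$ enter decisively.
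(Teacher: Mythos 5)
Your proposal follows essentially the same route as the paper's proof: iterated conditional integration by parts (Proposition~\ref{prop:int-parts-cond-W}) peeling off increments from the last one backward, with Lemma~\ref{estimate bivar mall} applied at Sobolev order $d+k_j$ producing the factor $(t_j-t_{j-1})^{-(d+k_j)H}$, and the exponential tail coming from the pathwise H\"older bound on the increment combined with the Gaussian tail of the relevant Besov-type norm. The one point worth making explicit is that the nuisance weights generated at each stage are not merely absorbed by H\"older's inequality at the end: they must be carried as the weight $G$ in the next integration by parts, which is why the paper controls the increments via the Besov functional $\cn_{\gamma,q}(\mathbf{B})$ specifically because it is smooth in the Malliavin sense.
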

\begin{remark}
In the above upper bound, the term $t_1^{-(d+k_1)H}$ has been absorbed in the constant $C_1$, given the fact that $t_1\geq a>0$.
\end{remark}
\begin{proof} The proof is similar to the one for Theorem 3.5 in \cite{LO}.  We only prove the case of $n=3$. The general case is almost identical. The positive constants $c_i$, $1\le i\le 4$, may change from line to line.

First observe that  $\tilde{p}_{t_1, t_2, t_3}$ can be expressed as
\begin{align*}
&\partial^{k_1}_{\xi_1}\partial^{k_2}_{\xi_2}\partial^{k_3}_{\xi_3}\tilde{p}_{t_1, t_2, t_3}(\xi_{1},\xi_{2},\xi_3)\\
= &\partial^{k_1}_{\xi_1}\partial^{k_2}_{\xi_2}\partial^{k_3}_{\xi_3}\EE\lc \delta_{\xi_1}(X_{t_1}) \, \delta_{\xi_2}(X_{t_2}-X_{t_1}) \, \delta_{\xi_3}(X_{t_3-t_2}) \,  \rc,
\quad\text{for}\quad
\xi_{1},\xi_{2},\xi_3\in\R^{d},\\
 = &\partial^{k_1}_{\xi_1}\partial^{k_2}_{\xi_2}\EE\lc \delta_{\xi_1}(X_{t_1}) \,\delta_{\xi_2}(X_{t_2}-X_{t_1})\,\partial^{k_3}_{\xi_3} \EE_{t_2}\lc \delta_{\xi_3}(X_{t_3}-X_{t_2})  \rc \rc.
\end{align*}
Due to Proposition \ref{prop:int-parts-cond-W},  $M_{t_2t_3}=\partial^{k_3}_{\xi_3} \EE_{t_2}\lc \delta_{\xi_2}(X_{t_3}-X_{t_2})  \rc $ can be bounded from above as follows:
\begin{align*}
|M_{t_2t_3}| 
\leq  c_1 \Vert \Gamma_{X_{t_3}-X_{t_2},t_2}^{-1}   \Vert^{d+k_3}_{d+k_3, 2^{d+k_3+2},t_2} \, \Vert D(X_{t_3}-X_{t_2}) \Vert^{d+k_3}_{d+k_3, 2^{d+k_3+2};t_2}  \big(\EE_{t_2}[ \1_{(X_{t_3}-X_{t_2} > \xi_{3})}]\big)^{1/2},
\end{align*}
where $c_1$ is some positive constant. Therefore, Lemma \ref{estimate bivar mall} yields that for some constant $c_2>0$,
\begin{align}\label{first-bnd-bivariate}
&\partial^{k_1}_{\xi_1}\partial^{k_2}_{\xi_2}\partial^{k_3}_{\xi_3}{p}_{t_1, t_2, t_3}(\xi_{1},\xi_{2},\xi_3)\\
 \le & 
\frac{c_2}{(t_3-t_2)^{(d+k_3)H}}   
\partial^{k_1}_{\xi_1}\partial^{k_2}_{\xi_2}\EE\lc \delta_{\xi_1}(X_{t_1}) \, \delta_{\xi_2}(X_{t_2}-X_{t_1})\,
\big(\mathbb{E}_{t_2} (1+ G)^2\big)^{\frac{d+k_3}{2^{d+k_3+1}}} \,
\big(\EE_{t_2}[ \1_{(X_{t_3}-X_{t_2} > \xi_{3})}]\big)^{1/2} \rc \nonumber
\end{align}

To proceed, denote by $\mathbf{B}$ the lift of $B$ as a rough path. Set
\[
\cn_{\gamma,q}(\mathbf{B})=\int_{0}^{T}\int_{0}^{T}
\frac{d(\mathbf{B}_v,\mathbf{B}_u)^{q}}{|v-u|^{\gamma q}}\,dudv,
\]
where $\gamma <H$ and $q$ a nonnegative integer. $\cn_{\gamma,q}(\mathbf{B})$ is the Besov norm of $\mathbf{B}$ (as a function in $t$). The reason we need to consider these Besov norms $\cn_{\gamma,q}(\mathbf{B})$ is that they are  smooth in the Malliavin sense. 

Fix any $\varepsilon>0$ satisfying $\gamma+\varepsilon<H$. It has been shown in \cite[Theorem 3.5]{LO} that for some $c>0$,
\begin{equation*}
|X_t-X_s| \le c\, |t-s|^\gamma 
(1+\cn_{\ga+\epsilon,2q}(\mathbf{B})^{1/2q})^{1/\gamma},
\end{equation*}
and there exists a constant $\lambda_0>0$ such that 
$$\me \exp\left\{\lambda_0 \cn_{\ga+\epsilon,2q}(\mathbf{B})^{1/q} \right\}<\infty.$$
That is, $\cn_{\gamma+\epsilon,2q}(\mathbf{B})^{1/2q}$ has Gaussian tail.
Thus, for $\lambda<\lambda_0$ we can find a constant $c_3>0$,
\begin{equation*}
\EE_{t_2}\left[ \1_{(X_{t_3}-X_{t_2} > \xi_{3})} \right] \le 
 c_3  \exp\left\{-\lambda\frac{|\xi_3|^{2\gamma}}{|t_3-t_2|^{2\gamma^2}} \right\}\EE_{t_2}\exp\left\{ \lambda (1+\cn_{\ga+\epsilon,2q}(\mathbf{B}))^{2/2q}\right\} .
\end{equation*}
Plugging this inequality into \eqref{first-bnd-bivariate}, we end up with:
\begin{align}\label{second-bnd-bivariate}
&\partial^{k_1}_{\xi_1}\partial^{k_2}_{\xi_2}\partial^{k_3}_{\xi_3}\tilde{p}_{t_1, t_2, t_3}(\xi_{1},\xi_{2},\xi_3)\\
\le&  \frac{c_3}{(t_3-t_2)^{(d+k_3)H}} e^{-\lambda\frac{|\xi_3|^{2\gamma}}{c_4|t-s|^{2\gamma^2}} }  \,
\partial^{k_1}_{\xi_1}\partial^{k_2}_{\xi_2}\EE\left[ \delta_{\xi_1}(X_{t_1}) \,\delta_{\xi_2}(X_{t_2}-X_{t_1})\, \Psi  \right]\nonumber
\end{align}
where $\Psi$ is a random variable that is smooth in the Malliavin calculus sense. [$\Psi$ only depends on $G$ and $ \cn_{\ga+\epsilon,2q}(\mathbf{B})^{1/2q}$.]

Now we can continue our computation by writing
$$\partial^{k_1}_{\xi_1}\partial^{k_2}_{\xi_2}\EE\left[ \delta_{\xi_1}(X_{t_1}) \,\delta_{\xi_2}(X_{t_2}-X_{t_1})\, \Psi  \right]=\partial^{k_1}_{\xi_1}\EE\left[ \delta_{\xi_1}(X_{t_1}) \,\partial^{k_2}_{\xi_2}\EE_{t_1}[\delta_{\xi_2}(X_{t_2}-X_{t_1})\, \Psi  ]\right],$$
and repeating the above argument to establish an upper bound for $M_{t_2t_1}=\partial^{k_2}_{\xi_2}\EE_{t_1}[\delta_{\xi_2}(X_{t_2}-X_{t_1})\, \Psi  ]$. Observe that the existence of a smooth random variable $\Psi$ does not affect the upper bound estimate when applying integration by parts. [But we may need to choose $\lambda$ small enough to ensure sufficient integrability for $\Psi$ and its Malliavin derivatives.] Hence, 
\begin{align}\label{third-bnd-bivariate}
&\partial^{k_1}_{\xi_1}\partial^{k_2}_{\xi_2}\partial^{k_3}_{\xi_3}\tilde{p}_{t_1, t_2, t_3}(\xi_{1},\xi_{2},\xi_3)\\
\le&  c_3\frac{1}{(t_3-t_2)^{(d+k_3)H}} e^{-\lambda\frac{|\xi_3|^{2\gamma}}{c_4|t_3-t_2|^{2\gamma^2}} } \, \frac{1}{(t_2-t_1)^{(d+k_2)H}} e^{-\lambda\frac{|\xi_2|^{2\gamma}}{c_4|t_2-t_1|^{2\gamma^2}} }  \,
\partial^{k_1}_{\xi_1}\EE\left[ \delta_{\xi_1}(X_{t_1}) \, \Phi  \right]\nonumber
\end{align}
where $\Phi$ is a random variable that is smooth in the Malliavin calculus sense. [$\Phi$ still only depends on $G$ and $ \cn_{\ga+\epsilon,2q}(\mathbf{B})^{1/2q}$.]
 We can now integrate \eqref{third-bnd-bivariate} safely by parts in order to regularize the term $\delta_{\xi_1}(X_s)$, which finishes the proof.

\end{proof}

The following technical result will be needed later. Let $p_{t_1,...,t_n}(x_1,...,x_n)$ be the joint density of $(X_{t_1},...,X_{t_n}), a\leq t_1\leq,...,\leq t_n\leq T$, and set
\begin{align}\label{v}
v(x_1,...,x_n)=\int_{[a,T]^n}p_{t_1,...,t_n}(x_1,...,x_n)dt_1,...,dt_n.
\end{align}

\begin{corollary}\label{v continuity}
If $(1+d)H<1$, then $v$ is uniformly continuous in $(x_1,...,x_n)$.
\end{corollary}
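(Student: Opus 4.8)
The plan is to prove the stronger assertion that $v$ is globally Lipschitz on $\R^{nd}$; uniform continuity then follows immediately. The only input is Theorem \ref{multi point density}, applied with exactly one of the integers $k_i$ equal to $1$ and all others equal to $0$.

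First I would pass from $p$ to $\tilde p$. Since $(x_1,\dots,x_n)\mapsto(x_1,\,x_2-x_1,\,\dots,\,x_n-x_{n-1})$ is linear with Jacobian $1$,
\[
p_{t_1,\dots,t_n}(x_1,\dots,x_n)=\tilde p_{t_1,\dots,t_n}\bigl(x_1,\,x_2-x_1,\,\dots,\,x_n-x_{n-1}\bigr),
\]
so by the chain rule each $\partial_{x_j}p_{t_1,\dots,t_n}$ equals a sum of at most two terms of the form $\pm(\partial_{\xi_i}\tilde p_{t_1,\dots,t_n})$ evaluated at the differenced point, with $i\in\{j,j+1\}$ (only $i=n$ when $j=n$). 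Because $p_{t_1,\dots,t_n}(x_1,\dots,x_n)=p_{t_{\sigma(1)},\dots,t_{\sigma(n)}}(x_{\sigma(1)},\dots,x_{\sigma(n)})$ for every permutation $\sigma$, it suffices to estimate such a term on the simplex $a\le t_1<\dots<t_n\le T$, where Theorem \ref{multi point density} is available. Taking there $k_i=1$ for the single relevant index and $k_\ell=0$ otherwise, and bounding every Gaussian factor $\exp\{-|\xi_\ell|^{2\gamma}/(C_2|t_\ell-t_{\ell-1}|^{2\gamma^2})\}$ by $1$, I obtain a constant $C$ such that, for all $x\in\R^{nd}$ and all $a\le t_1<\dots<t_n\le T$,
\[
\bigl|(\partial_{\xi_i}\tilde p_{t_1,\dots,t_n})\bigl(x_1,x_2-x_1,\dots,x_n-x_{n-1}\bigr)\bigr|\ \le\ C\prod_{\ell=2}^{n}\frac{1}{(t_\ell-t_{\ell-1})^{(d+k_\ell)H}},
\]
where $(k_2,\dots,k_n)\in\{0,1\}^{n-1}$ has at most one coordinate equal to $1$; the $t_1$-dependence has been absorbed into $C$ because $t_1\ge a$.

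Then I would integrate in $t$ over $[a,T]^n$ (after splitting it into simplices and relabelling coordinates). In each such bound only the consecutive gaps $t_\ell-t_{\ell-1}$ appear, with exponent $(d+k_\ell)H\le(d+1)H$, so integrating successively in $t_n,t_{n-1},\dots,t_2$ reduces everything to the one-dimensional integrals $\int_0^{T-a}\tau^{-(d+1)H}\,d\tau$ and $\int_0^{T-a}\tau^{-dH}\,d\tau$, both finite precisely because $(1+d)H<1$. (The same computation with all $k_\ell=0$ shows that $v$ itself is finite.) Collecting the at most two terms per $j$ gives a constant $C'<\infty$, independent of $x$, with
\[
\int_{[a,T]^n}\sup_{x\in\R^{nd}}\bigl|\nabla_x p_{t_1,\dots,t_n}(x)\bigr|\,dt_1\cdots dt_n\ \le\ C'.
\]
Finally, the mean value theorem yields $|p_{t_1,\dots,t_n}(x)-p_{t_1,\dots,t_n}(y)|\le|x-y|\,\sup_z|\nabla_z p_{t_1,\dots,t_n}(z)|$, and integrating in $t$ gives $|v(x)-v(y)|\le C'|x-y|$; hence $v$ is Lipschitz and in particular uniformly continuous.

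I do not expect a genuine obstacle here: the only point requiring care is the bookkeeping — translating $x$-derivatives of $p$ into $\xi$-derivatives of $\tilde p$, reducing to ordered times via the permutation symmetry, and observing that the hypothesis $(1+d)H<1$ is exactly the condition that makes the iterated time integral converge, since after one differentiation at most one time-gap carries the worse exponent $(d+1)H$ while all others carry $dH$. Smoothness of $p_{t_1,\dots,t_n}$ in $x$ (used for the mean value theorem) and its joint measurability in $(t,x)$ (used for Fubini) are standard consequences of the non-degeneracy of $X$.
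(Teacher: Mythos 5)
Your proof is correct and follows essentially the same route as the paper's: bound $|v(\bar x)-v(\bar y)|$ by $|\bar x-\bar y|$ times the time-integral of $|\partial_{\bar x}p_{t_1,\dots,t_n}|$, convert $x$-derivatives of $p$ into $\xi$-derivatives of $\tilde p$, apply Theorem \ref{multi point density} with a single $k_i=1$, and use $(1+d)H<1$ to make the iterated time integral converge. Your write-up is in fact slightly more careful than the paper's (explicit chain-rule bookkeeping and the reduction to ordered times via permutation symmetry, which the paper leaves implicit), but there is no difference in substance.
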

\begin{proof}
Let $\bar{x}=(x_1,...,x_n)$ and $\bar{y}=(y_1,...,y_n)$.
\begin{align}
|v(\bar{x})-v(\bar{y})|\leq&\int_{[a,T]^n}|p_{t_1,...,t_n}(\bar{x})-p_{t_1,...,t_n}(\bar{y})|dt_1,...,dt_n\nonumber\\
\leq&|\bar{x}-\bar{y}|\int_{[a,T]^n} |\partial_{\bar{x}}p_{t_1,...,t_n}(\bar{x}^*)|dt_1,...,dt_n.\label{vx-vy}
\end{align}
Observe that
$$p_{t_1,...,t_n}(x_1,...,x_n)=\tilde{p}_{t_1,...,t_n}(x_1, x_2-x_1, x_3-x_2,...,x_n-x_{n-1}).$$ 
It follows immediately from Theorem \ref{multi point density} that
\begin{align}\label{bound first derivative p}|\partial_{\bar{x}}p_{t_1,...,t_n}(\bar{x})|\leq&c_1\sum_{k=2}^n\frac{1}{|t_2-t_2|^{dH}}\cdots\frac{1}{|t_k-t_{k-1}|^{(1+d)H}}\cdots\frac{1}{|t_n-t_{n-1}|^{dH}}\nonumber\\
\leq& c_2\frac{1}{|t_2-t_1|^{(1+d)H}}\cdots\frac{1}{|t_n-t_{n-1}|^{(1+d)H}}.\end{align}
By combining (\ref{vx-vy}) and (\ref{bound first derivative p}), and since there is no singularity in the integral (\ref{vx-vy}) in $t$ under our assumption $(1+d)H<1$, the proof is complete.
\end{proof}

\section{Existence of local time}
For any time interval $I\subset[0,T]$, define $$L(I, A):=\lambda(\{s\in I: X_s\in A\}),\quad\quad A\subset\IR^d,$$ where $\lambda$ is the Lebesgue measure on $\mr^+$. It is clear that $L(I,\cdot)$ is the occupation measure on $\IR^d$ of $X$ during time interval $I$. If $L(I,\cdot)$ is absolutely continuous with respect to the Lebesgue measure $\lambda_d$ on $\IR^d$, we denote the Radon-Nikodym derivative  by $$L(I,x)=\frac{dL(I,\cdot)}{d\lambda_d},$$ and call $L(I, x)$ the occupation density of $X$ at $x$ during time interval $I$. Thus it holds for  all measurable $A\subset \IR^d$ that
\begin{equation*}
L(I, A)=\int_{x\in A}L(I,x)dx.
\end{equation*}
Whenever there is no confusion, we always write $L(t,A)=L([0,t],A)$ and $L(t,x)=L([0,t],x)$ for the occupation measure and occupation density, respectively.

The following theorem is a classical result  on the existence of occupation density (local time) for a stochastic process $X$ (\cite[Theorem 21.12]{GH}).
\begin{theorem}\label{existence criteria}
For any interval $I\subset\mr^+$, suppose

\begin{equation*}
\liminf_{\epsilon\downarrow 0}\frac{1}{\epsilon^d}\int_I \IP\{\left(\left|X_s-X_u\right|\right)\leq\epsilon\}ds<\infty\quad \text{for\ a.e.}\ u\in I,
\end{equation*}
then the occupation density $L(I,x)$ exists almost surely.
\\
\end{theorem}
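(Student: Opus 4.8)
The plan is to show that, almost surely, the (random) occupation measure $\nu:=L(I,\cdot)$ is absolutely continuous with respect to Lebesgue measure $\lambda_d$ on $\IR^d$; since $L(I,A)=\lambda(\{s\in I:X_s\in A\})$, this $\nu$ is precisely the image of $\mathbf{1}_I(s)\,ds$ under the map $s\mapsto X_s$, hence a finite (therefore Radon) Borel measure on $\IR^d$ of total mass $\lambda(I)$. The only non-probabilistic ingredient I would use is the differentiation theorem for Radon measures (Besicovitch--Lebesgue): in the Lebesgue decomposition $\nu=\nu_{ac}+\nu_s$, the singular part $\nu_s$ is carried by the set
\[
N=N(\omega):=\Bigl\{x\in\IR^d:\ \liminf_{\epsilon\downarrow 0}\frac{\nu\big(B(x,\epsilon)\big)}{\epsilon^{d}}=+\infty\Bigr\},
\]
where one may freely restrict to dyadic radii $\epsilon=2^{-n}$ (this changes the ratio by at most a factor $2^{d}$, so it keeps $N$ manifestly measurable and does not affect finiteness). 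Consequently $\{\nu(N)=0\}\subseteq\{\nu\ll\lambda_d\}$, and it suffices to prove $\IE[\nu(N)]=0$.

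First I would rewrite $\nu(N)$ via the image-measure description: $\nu(N(\omega))=\lambda\big(\{u\in I:X_u(\omega)\in N(\omega)\}\big)=\int_I\mathbf{1}_{N(\omega)}\big(X_u(\omega)\big)\,du$. Granting joint measurability of $(\omega,u)\mapsto\mathbf{1}_{N(\omega)}(X_u(\omega))$ --- which follows from joint measurability of $(s,\omega)\mapsto X_s(\omega)$ together with the monotonicity of $\epsilon\mapsto\lambda(\{s\in I:|X_s-x|\le\epsilon\})$ --- Tonelli's theorem gives $\IE[\nu(N)]=\int_I\IP(X_u\in N)\,du$. Thus the problem reduces to showing $\IP(X_u\in N)=0$ for Lebesgue-almost every $u\in I$.

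Then I would fix such a $u$, at which the standing hypothesis holds, and set $Y_\epsilon:=\epsilon^{-d}\int_I\mathbf{1}_{\{|X_s-X_u|\le\epsilon\}}\,ds\ge 0$, noting that (again up to a bounded factor that does not affect finiteness) $\{X_u\in N\}=\{\liminf_{\epsilon\downarrow 0}Y_\epsilon=+\infty\}$. Tonelli gives $\IE[Y_\epsilon]=\epsilon^{-d}\int_I\IP(|X_s-X_u|\le\epsilon)\,ds$, and the hypothesis states precisely that $\liminf_{\epsilon\downarrow 0}\IE[Y_\epsilon]<\infty$. Picking a sequence $\epsilon_n\downarrow 0$ realizing this $\liminf$ and applying Fatou's lemma along it yields $\IE[\liminf_n Y_{\epsilon_n}]\le\liminf_n\IE[Y_{\epsilon_n}]<\infty$, whence $\liminf_{\epsilon\downarrow 0}Y_\epsilon\le\liminf_n Y_{\epsilon_n}<\infty$ almost surely, i.e.\ $\IP(X_u\in N)=0$. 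Assembling the three steps gives $\IE[\nu(N)]=0$, hence $\nu\ll\lambda_d$ almost surely, which is the asserted existence of the occupation density.

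The step I expect to be genuinely delicate is the first one: one must identify a carrier of $\nu_s$ in terms of the \emph{lower} density $\liminf_{\epsilon}\nu(B(x,\epsilon))/\epsilon^{d}$ rather than the upper density, and one must keep track of the order of quantifiers so that the single scalar identity $\IE[\nu(N)]=0$ really upgrades to the pathwise statement ``$\nu\ll\lambda_d$ almost surely.'' The remaining pieces are routine measure theory (Tonelli and Fatou); the only wrinkle there is that the $\liminf$ over all $\epsilon\downarrow 0$ in the hypothesis is handled by descending along one well-chosen sequence, which is exactly what makes Fatou's lemma applicable.
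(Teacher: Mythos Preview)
The paper does not supply its own proof of this statement: it is quoted as a classical result and attributed to Geman--Horowitz \cite[Theorem 21.12]{GH}, with no argument given. So there is no in-paper proof to compare against.

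That said, your outline is correct and is essentially the classical argument one finds in \cite{GH}. The only point worth stressing is the one you already flagged as delicate: you need that the \emph{singular} part $\nu_s$ is carried by the set where the \emph{lower} symmetric derivative is infinite. This is indeed the content of the Besicovitch differentiation theorem for Radon measures (for $\nu_s$-a.e.\ $x$ the full limit $\lim_{\epsilon\downarrow 0}\nu(B(x,\epsilon))/\lambda_d(B(x,\epsilon))$ equals $+\infty$, hence so does the $\liminf$), so your inclusion $\{\nu(N)=0\}\subseteq\{\nu\ll\lambda_d\}$ is justified. The remaining steps---rewriting $\nu(N)$ via the image measure, Tonelli to reduce to $\IP(X_u\in N)=0$ for a.e.\ $u$, and Fatou along a minimizing sequence $\epsilon_n\downarrow 0$---are exactly as in the standard proof and are fine. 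One implicit hypothesis you are using (and which the paper tacitly assumes throughout) is joint measurability of $(s,\omega)\mapsto X_s(\omega)$; this is needed both for the Tonelli step and for the measurability of $N(\omega)$ in $\omega$.
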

On account of the above theorem, we are ready to state our result on the existence of local time for the solution to equation (\ref{SDE}).
\begin{theorem}
Let $X$ be the solution to equation (\ref{SDE}). Assume that $dH<1$, then for each fixed $t\in[0,T]$,  $X$ has local time $L(t,x)$ almost surely.
\end{theorem}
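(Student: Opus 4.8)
The strategy is to verify the hypothesis of Theorem~\ref{existence criteria} (the Berman-type criterion) using the one-point density estimate that follows from Theorem~\ref{multi point density} with $n=1$. Concretely, for a.e.\ $u\in[0,t]$ and all $s>u$ in $[0,t]$, I need to bound
\[
\int_0^t \IP\{|X_s-X_u|\le \eps\}\,ds
\]
and show that after dividing by $\eps^d$ it stays bounded as $\eps\downarrow 0$. Split the integral at $u$. For the part $s<u$, by stationarity of the increments in law (or simply by symmetry of the roles, applying the estimate to the pair $(X_u,X_u-X_s)$), the analysis is the same as for $s>u$, so it suffices to treat $s\in(u,t]$.

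\textbf{Key steps.} First, fix $u$ with $u\ge a$ for the small parameter $a$ from Theorem~\ref{multi point density}; since the exceptional set is allowed to be null and $a$ is arbitrarily small, handling a.e.\ $u\in(0,t]$ this way is enough (the contribution of $s,u$ near $0$ can be absorbed, or one notes the criterion only needs a.e.\ $u$). Apply Theorem~\ref{multi point density} with $n=1$, $k_1=0$: the density $\tilde p_s$ of $X_s-X_u$, conditionally this is really the one-increment bound, satisfies
\[
\tilde p_{u,s}(\xi)\le C_1\,\frac{1}{(s-u)^{dH}}\,e^{-\frac{|\xi|^{2\gamma}}{C_2(s-u)^{2\gamma^2}}}.
\]
Integrating over the ball $\{|\xi|\le\eps\}$ gives
\[
\IP\{|X_s-X_u|\le\eps\}\le C_1\,\frac{1}{(s-u)^{dH}}\int_{|\xi|\le\eps}e^{-\frac{|\xi|^{2\gamma}}{C_2(s-u)^{2\gamma^2}}}\,d\xi
\le C\,\min\!\left(\frac{\eps^d}{(s-u)^{dH}},\,1\right),
\]
where the trivial bound $\IP\le 1$ is used when $s-u$ is very small. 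Then
\[
\frac{1}{\eps^d}\int_u^t \IP\{|X_s-X_u|\le\eps\}\,ds
\le C\int_u^t \min\!\left(\frac{1}{(s-u)^{dH}},\,\frac{1}{\eps^d}\right)ds
\le C\int_0^T \frac{dr}{r^{dH}} + C\cdot\eps^{d/H}\cdot\eps^{-d},
\]
after substituting $r=s-u$ and splitting the $\min$ at the crossover $r\asymp\eps^{1/H}$. Under the assumption $dH<1$ the integral $\int_0^T r^{-dH}\,dr$ converges, and the second term $\eps^{d/H-d}=\eps^{d(1-H)/H}\to 0$ (or stays bounded) since $H<1$. Hence the $\liminf$ in Theorem~\ref{existence criteria} is finite, and local time $L(t,x)$ exists a.s.

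\textbf{Main obstacle.} The only delicate point is the interface between the density estimate of Theorem~\ref{multi point density}, which is stated for starting times $\ge a>0$, and the criterion of Theorem~\ref{existence criteria}, which ranges over all $u\in[0,t]$. I would handle this by noting that Theorem~\ref{existence criteria} requires the integrability only for a.e.\ $u$, and that for any $u\in(0,t]$ one may take $a=u/2$ (or any $a<u$) in Theorem~\ref{multi point density}; the constants $C_1,C_2$ then depend on $u$, but that is harmless since $u$ is fixed when checking the criterion for that particular $u$. The $s<u$ half is symmetric: apply the theorem with base time $s\ge a'$ to the increment $X_u-X_s$, getting the same $\min(\eps^d(u-s)^{-dH},1)$ bound. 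Everything else is the routine calculus above; there is genuinely no analytic difficulty once the singularity exponent $dH<1$ is in force.
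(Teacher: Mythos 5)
Your verification of the Berman criterion on a time interval bounded away from the origin is correct and is exactly the paper's first step: the density bound $\tilde p_{s,u}(z)\le c_1(u-s)^{-dH}\exp\{-|z|^{2\gamma}/(c_2(u-s)^{2\gamma^2})\}$ extracted from Theorem \ref{multi point density}, integration over the ball $\{|\xi|\le\eps\}$, and convergence of $\int_0^T r^{-dH}\,dr$ under $dH<1$. (The crossover point in your $\min$ split should be $r\asymp\eps^{1/H}$ rather than $\eps^{d/H}$, but this is harmless since the whole integral is dominated by $\int_0^T r^{-dH}\,dr$ anyway. Also, the increments of $X$ are not stationary in law; your fallback "symmetry of the roles" is the right justification, not stationarity.)

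The genuine gap is in your treatment of times near $0$. For a fixed $u>0$ the criterion of Theorem \ref{existence criteria} on $I=[0,t]$ requires $\liminf_{\eps}\eps^{-d}\int_0^t\IP\{|X_s-X_u|\le\eps\}\,ds<\infty$, and the integral runs over all $s\in[0,t]$, including $s$ arbitrarily close to $0$. Your fix for the $s<u$ half --- "apply the theorem with base time $s\ge a'$" --- fails there: Theorem \ref{multi point density} (via Lemma \ref{estimate bivar mall}) controls the increment's density only when the earlier time is $\ge a>0$, with constants depending on $a$, so there is no bound uniform over $s\in(0,a')$. Nor can that piece be "absorbed": with only the trivial bound $\IP\le 1$ one gets $\eps^{-d}\int_0^{a'}1\,ds=a'\eps^{-d}\to\infty$, and invoking "a.e.\ $u$" does not help because the obstruction sits in the $s$-variable. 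The paper circumvents this differently: it applies the criterion only on $I=[a,t]$ to obtain $L([a,t],x)$ for each $a>0$, works with the specific version $\liminf_{\eps}(C_d\eps^d)^{-1}\int_a^t\mathbf{1}_{B(x,\eps)}(X_s)\,ds$, which is monotone in $a$, and then lets $a\downarrow0$ along a sequence, using monotone convergence of the occupation measures to produce $L([0,t],x)$. To complete your argument you need either that limiting step, or an extra conditioning argument (e.g., condition on $\cF_{u/2}$ and use the conditional density bound for $X_u-X_{u/2}$ to get $\IP\{|X_u-X_s|\le\eps\}\le C(u)\eps^d$ uniformly for $s\le u/2$); as written, the passage from $[a,t]$ to $[0,t]$ is not justified.
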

\begin{proof}
Fix a  $t\in[0,T]$ and $a\in(0,t]$. It follows from Theorem \ref{multi point density} that the probability density  $\tilde{p}_{s,u}(z)$ of $X_u-X_s$ satisfies that for some $c_1, c_2>0$, 
\begin{equation*} 
\tilde{p}_{s,u}(z) \leq  \frac{c_1}{(u-s)^{dH}} \exp\left\{-\frac{|z|^{2\gamma}}{c_2(u-s)^{2\gamma^2}} \right\},\; \text{ for all } a\leq s<u\leq T,
\end{equation*}
where $\gamma<H$. It follows immediately that under our assumption $dH<1$, we have for the solution $X$ to equation (\ref{SDE})
\begin{equation*}
\liminf_{\epsilon\downarrow 0}\frac{1}{\epsilon^d}\int_a^t \IP\{\left(\left|X_s-X_u\right|\right)\leq\epsilon\}ds<\infty\quad \text{for\ all}\ u\in [a,t].
\end{equation*}
Thus, by Theorem \ref{existence criteria}, there exists a null set $N_a\subset\Omega$ such that  the occupation density $L([a,t],x)(\omega)$ exists for all $\omega\notin N_a$. Since the occupation density exists for all $\omega\notin N_a$, we can work on the following modification (in $x$) of it (for all $\omega\notin N_a$), which is still denoted by $L([a,t],x)$,
$$L([a,t],x)=\liminf_{\epsilon\downarrow0}\frac{L([a,t],B(x,\epsilon))}{C_d\epsilon^d}=\liminf_{\epsilon\downarrow0}\frac{1}{C_d\epsilon^d}\int_a^t\mathbf{1}_{B(x,\epsilon)}(X_s)ds.$$
Here, $C_d$ is such that $C_d\epsilon^d$ gives the volume of the ball $B(x,\epsilon)$ in $\IR^d$. The reason to work on this version of $L([a,t],x)$ is that it is monotone in $a$. 

Observe that we have for all $\omega\notin N_a$,
$$L([a,t], A)=\int_{x\in A}L([a,t],x)dx,\quad\quad\text{for\ all\ Broel}\ A\subset\IR^d.$$
Letting $a\downarrow0$ (along a sequence $a_n$) in the above and by monotone convergence theorem, we have for all $\omega\notin \cup_{n=1}^\infty N_{a_n}$
$$L([0,t],A)=\lim_{a_n\downarrow0}L([a_n,t], A)=\int_{x\in A}\lim_{a_n\downarrow0}L([a_n,t],x)dx.$$
Hence, almost surely, the occupation density $L([0,t],x)$ of the occupation measure $L([0,t],A)$ exists (which is $\lim_{a_n\downarrow0}L([a_n,t],x)$).
\end{proof}

\section{Regularity of local time}
Unless otherwise stated, it is assumed that $(1+d)H<1$ throughout this section. As a remark, in the case $d=1$, any $H<1/2$ satisfies this condition.

Fix a small positive number $a\in(0,T]$, and let $L^a(t,A)$ and $L^a(t,x)$ be the occupation measure and occupation density respectively on the time interval $[a,t]$. That is
$$L^a(t,A)=L([a,t], A),\quad \text{and}\ \ L^a(t,x)=L([a,t],x).$$ 
In order to use Kolmogorov type continuity theorems to conclude various continuities of $L^a(t,x)$ in $t$ and $x$,  we seek to provide upper bounds for quantities in the form of $\IE|L^a(t,x)-L^a(t,y)|^n$. For this purpose, it is necessary to specify, for each fixed $t$ and $x$, a version of $L^a(t,x)$ more carefully. Let 
\begin{align}\label{approximation L}L_0^a(t,x)=\liminf_{\epsilon\downarrow0}\frac{1}{C_d\epsilon^d}\int_a^t\mathbf{1}_{B(x,\epsilon)}(X_s)ds,\end{align}
which, by our discussion before, exists as a finite limit for a.e. $x$ almost surely. 
Set
\begin{equation}
L_\epsilon^a(t,x)=\frac{1}{C_d\epsilon^d}\int_a^t\mathbf{1}_{B(x,\epsilon)}(X_s)ds
\end{equation} Recall the content of Corollary \ref{v continuity}, standard argument shows that the uniform continuity of $v$ implies that $L_\epsilon^a(t,x)$ is Cauchy in $L^n(\IP)$, uniformly in $x$. [See the argument in p42-p43 in \cite{GH}.] It follows that there is a subsequence $\{\epsilon_n\}_{n\ge 1}$ such that $L_{\epsilon_n}^a(t,x)$ converges uniformly in $L^n(\IP)$ and converges almost surely for each fixed $x$. Hence the limit exists in (\ref{approximation L}) through the subsequence $\epsilon_n$ both uniformly in $L^n(\IP)$ and almost surely for each fixed $x$, and there is no problem then in arguing as if $\epsilon_n$ is the full sequence $\epsilon$ in the limit in (\ref{approximation L}). This limit, which we denote by $L^a(t,x)$, is the version we work with throughout our discussion below. 

The following theorem is a restatement of Theorem 3.1 of \cite{Berman}.
\begin{theorem}\label{Berman}
Let  $Y(t, x), (t,x)\in [0,T]\times I_R$ be a stochastic process of two parameters. Suppose there are positive constants $n, C$ and $c,d$ such that
\begin{equation}\label{127303}
\IE|Y(t, x+k)-Y(t, x)|^n\le C|k|^{1+c}, \quad \text{for }x, x+k\in I_R, t=0,T;
\end{equation}
\begin{equation}\label{127305}
\IE|Y(t+h,x)-Y(t,x)|^n\le C|h|^{1+d},\quad \text{for all }t,t+h\in [0,T], \ \mathrm{and}\  x \in I_R;
\end{equation}
\begin{align}\label{127306}
\IE|Y(t+h, x+k)-Y(t, x+k)-&Y(t+h, x)+Y(t,x)|^n\le C|k|^{1+c}|h|^{1+d},\nonumber\\
& \text{ for all } t,t+h\in [0,T]\ \mathrm{and}\ x, x+k \in I_R.
\end{align}
Then there exists a version of the process $Y$ jointly continuous in $(t,x)$. Moreover, for every $\gamma<d/r$ there exist random variables $\eta$ and $\Delta$ which are almost surely positive and finite such that
\begin{align*}
|Y(t+h,x)-Y(t,x)|\le \Delta|h|^{\gamma}, \text{ for all } t,t+h\in [0,T],x\in I_R, \text{ and }|h|<\eta.
\end{align*}
\end{theorem}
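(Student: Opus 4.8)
This is a two-parameter Kolmogorov--\v{C}entsov type continuity theorem in which increments in the time variable and in the space variable are allowed to scale with different exponents, and the plan is to split $Y$ into pieces supported on the edges of the parameter box $[0,T]\times I_R$ together with a sheet-like remainder, so that the classical one-parameter criterion can be applied to each piece. Concretely, fix the corner $(0,0)$ of $[0,T]\times I_R$ and write
\begin{align*}
Y(t,x)&=Y(0,0)+\big(Y(t,0)-Y(0,0)\big)+\big(Y(0,x)-Y(0,0)\big)\\
&\qquad+\big(Y(t,x)-Y(t,0)-Y(0,x)+Y(0,0)\big),
\end{align*}
and denote by $A(t)$, $B(x)$, $R(t,x)$ the three parenthesized increments, respectively. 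The process $A$ depends on $t$ only and, by \eqref{127305} with $x=0$, satisfies $\IE|A(t+h)-A(t)|^n\le C|h|^{1+d}$; the process $B$ depends on $x$ only and, by \eqref{127303} at the boundary value $t=0$, satisfies $\IE|B(x+k)-B(x)|^n\le C|k|^{1+c}$ --- which, incidentally, is why it suffices to impose \eqref{127303} only on the time boundary. One-parameter Kolmogorov--\v{C}entsov then yields continuous versions of $A$ and $B$, with $A$ almost surely $\gamma$-H\"{o}lder in $t$ for every $\gamma<d/n$.

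It remains to treat $R$, which vanishes on the two edges $\{t=0\}$ and $\{x=0\}$ and whose rectangular increments coincide with those of $Y$, so that $\IE|R(t',x')-R(t,x')-R(t',x)+R(t,x)|^n\le C|t-t'|^{1+d}|x-x'|^{1+c}$ by \eqref{127306}. Here I would invoke the two-parameter continuity theorem for processes controlled through a rectangular-increment moment bound (the sheet version of \v{C}entsov's chaining argument; compare the proof in \cite{GH}). The point that makes it work is that, because $R$ is null on the lower edges, every single-direction increment of $R$ is itself a rectangular increment over a thin rectangle: $R(t+h,x)-R(t,x)$ equals the rectangular increment of $Y$ over $[t,t+h]\times[0,x]$, hence has $L^n$-norm at most $C\,|h|^{(1+d)/n}$ (the $x$-side contributing only the bounded factor $(\mathrm{diam}\,I_R)^{(1+c)/n}$), and symmetrically in the $x$-direction. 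Running the dyadic chaining on a grid $D_m\times E_m$ of mesh $2^{-m}$, and combining these bounds with a union bound over the $O(2^{2m})$ pairs of adjacent grid points and Borel--Cantelli, one obtains a jointly continuous version of $R$ whose oscillation in $t$ over any interval of length $2^{-m}$ is, uniformly in $x$, of order $2^{-m\gamma}$ for every $\gamma<d/n$.

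Adding the three continuous versions back together produces the desired jointly continuous version of $Y$. Since only $A(t)$ and $R(t,x)$ depend on $t$, and both are $\gamma$-H\"{o}lder in $t$ for every $\gamma<d/n$ --- the estimate for $R$ being uniform in $x$ --- one gets $|Y(t+h,x)-Y(t,x)|\le|A(t+h)-A(t)|+|R(t+h,x)-R(t,x)|\le\Delta\,|h|^{\gamma}$ for all $t,t+h\in[0,T]$, all $x\in I_R$, and $|h|<\eta$, with $\eta,\Delta$ almost surely positive and finite; this is the stated conclusion (the exponent $d/r$ of the statement being read as $d/n$).

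The main obstacle is precisely the joint continuity of the rectangular remainder $R$: a generic two-parameter increment of $R$ is \emph{not} a rectangular increment, so the naive ``metric'' Kolmogorov criterion on the product box $[0,T]\times I_R$ does not apply directly, and one must genuinely use that $R$ vanishes on the edges in order to rewrite single-direction increments as bounded multiples of rectangular increments governed by \eqref{127306}. This also explains why hypothesis \eqref{127306} is needed on top of the two edge-wise bounds \eqref{127303} and \eqref{127305}: it supplies a product of two powers --- the two-dimensional ``$2+\varepsilon$'' type decay --- that makes the union-bound sum over the $O(2^{2m})$ grid cells converge. Everything else --- the one-parameter applications of Kolmogorov--\v{C}entsov, the Borel--Cantelli step, and the telescoping along dyadic rationals --- is routine.
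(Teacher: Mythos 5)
Your decomposition $Y=Y(0,0)+A(t)+B(x)+R(t,x)$, and the observation that every single-direction increment of $R$ is a thin-rectangle increment of $Y$ so that \eqref{127306} applies to it, are exactly the right starting points (and you correctly identify why \eqref{127303} need only be imposed on the time boundary, and that $d/r$ should read $d/n$). Note, however, that the paper offers no proof of this theorem at all --- it is quoted from Berman's Theorem 3.1 --- so the relevant comparison is with Berman's argument, and there your chaining step for $R$ has a genuine quantitative gap.

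A union bound over the $O(2^{2m})$ adjacent pairs of an isotropic grid of mesh $2^{-m}$, using the thin-rectangle tail $\IP\bigl(|R(t_{i+1},x_j)-R(t_i,x_j)|>2^{-m\gamma}\bigr)\le C\,2^{-m(1+d)}2^{mn\gamma}$, yields the Borel--Cantelli sum $\sum_m 2^{2m}2^{-m(1+d-n\gamma)}$, which converges only when $n\gamma<d-1$, not for the claimed range $\gamma<d/n$; and the analogous sum for the $x$-direction increments requires $c>1$, which fails outright in the paper's application (there $c=\delta<\tfrac1H-2$ can be arbitrarily small). Invoking \eqref{127306} to ``make the union-bound sum converge'' does not repair this: that bound controls the rectangular increments of individual grid cells, and re-summing cell increments along a row to recover a single-direction increment of $R$ loses a factor of $2^m$ again. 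The missing ingredient is a maximal inequality in $x$ for each fixed dyadic time pair: for fixed $t,t+h$ apply the quantitative one-parameter Kolmogorov (or Garsia--Rodemich--Rumsey) lemma to the process $x\mapsto Y(t+h,x)-Y(t,x)-Y(t+h,0)+Y(t,0)$, whose increments are governed by \eqref{127306} with the factor $|h|^{(1+d)/n}$ pulled out as a constant, to obtain $\IE\bigl[\sup_{x\in I_R}|R(t+h,x)-R(t,x)|^n\bigr]\le C'|h|^{1+d}$. The union bound then runs over only the $2^m$ dyadic time intervals, the sum $\sum_m 2^{m}2^{-m(1+d-n\gamma)}$ converges for every $\gamma<d/n$, and the standard telescoping delivers both the joint continuity and the uniform-in-$x$ H\"older bound with the stated exponent; this is essentially Berman's route.
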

Our primary goal for this section is to find a version of the local time $L^a(t,x)$ with jointly H\"{o}lder continuity in $(t,x)$. Furthermore, on account of Theorem \ref{Berman}, we show that the H\"{o}lder regularity of the local time in the time variable is uniform in the space variable. The next three lemmas establish the three inequalities in the condition of  Theoerm \ref{Berman}. We start off with the H\"{o}lder continuity in the space variable. 
\begin{lemma}\label{proving-holder-space}
Assume $d=1$ and $1/4<H<1/2$. For any $t\in[a,T]$, and  $\alpha$ satisfying $1<\alpha<\frac{1}{H}-1$, there exists a constant $C_1>0$ (depending on $a$ and $T$) such that for any $x, y\in \IR$, 
\begin{equation}
\IE\left|L^a(t,x)-L^a(t, y)\right|^2\le C_1|x-y|^\alpha. 
\end{equation}
\end{lemma}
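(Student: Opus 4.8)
The plan is to realize $L^a(t,x)$ through the approximating family $L^a_\eps(t,x) = \frac{1}{C_d\eps}\int_a^t \mathbf 1_{B(x,\eps)}(X_s)\,ds$ (with $d=1$), pass to the $L^2$-limit as in the construction preceding the lemma, and estimate the second moment of the increment directly using the two-point density bound from Theorem \ref{multi point density}. Concretely, I would write
\[
\IE\bigl|L^a_\eps(t,x)-L^a_\eps(t,y)\bigr|^2
= \frac{1}{C_d^2\eps^2}\int_{[a,t]^2}
\IE\Bigl[\bigl(\mathbf 1_{B(x,\eps)}(X_s)-\mathbf 1_{B(y,\eps)}(X_s)\bigr)
\bigl(\mathbf 1_{B(x,\eps)}(X_u)-\mathbf 1_{B(y,\eps)}(X_u)\bigr)\Bigr]\,ds\,du,
\]
and express the expectation via the joint density $p_{s,u}$ of $(X_s,X_u)$. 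Taking $\eps\to 0$ turns the indicator differences into a difference of delta masses, so that heuristically
\[
\IE\bigl|L^a(t,x)-L^a(t,y)\bigr|^2
= \int_{[a,t]^2}\bigl(p_{s,u}(x,x)-p_{s,u}(x,y)-p_{s,u}(y,x)+p_{s,u}(y,y)\bigr)\,ds\,du.
\]
This limiting identity has to be justified, but it is exactly the standard local-time computation carried out in \cite{GH}; the uniform-in-$x$ $L^2$-convergence established before the lemma via Corollary \ref{v continuity} is what makes the interchange of limit and integration legitimate.

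Next I would bound the bracketed fourth difference of $p_{s,u}$. By symmetry assume $s<u$ and recall $p_{s,u}(w,z)=\tilde p_{s,u}(w,z-w)$. The key point is that each pairwise difference such as $p_{s,u}(x,z)-p_{s,u}(y,z)$ can be written as $\int$ of $\partial_1 p_{s,u}$ along the segment from $x$ to $y$, and the full second difference as a double integral of $\partial_1\partial_2 p_{s,u}$ — but one should be economical: it suffices to take one derivative and split into two factors of $|x-y|^{\alpha/2}$, or equivalently to interpolate. The cleanest route is to bound
\[
\bigl|p_{s,u}(x,x)-p_{s,u}(x,y)-p_{s,u}(y,x)+p_{s,u}(y,y)\bigr|
\le |x-y|\cdot \sup_{z}\bigl|\partial_z\bigl(p_{s,u}(x,z)-p_{s,u}(y,z)\bigr)\bigr|,
\]
then further bound the right-hand side two ways: by $|x-y|\cdot C|u-s|^{-2H}$ using two spatial derivatives from Theorem \ref{multi point density} (giving exponent $(1+k)H$ with $k=1$ in the relevant slot, i.e. $|u-s|^{-2H}$), and by $C|u-s|^{-2H}$ trivially (one derivative, absorbing the Gaussian factors). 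Interpolating these two estimates with weights $\alpha-1$ and $2-\alpha$ gives a bound of the form $C|x-y|^\alpha |u-s|^{-2H}$ — wait, more carefully: the $k_1=1,k_2=0$ bound contributes $|u-s|^{-(1+1)H}=|u-s|^{-2H}$ as the $t_2-t_1$ power while $t_1\ge a$ is absorbed, so the single integrable requirement is $2H<1$, i.e. $H<1/2$. Then
\[
\IE\bigl|L^a(t,x)-L^a(t,y)\bigr|^2
\le C|x-y|^\alpha \int_{a}^{T}\!\!\int_{a}^{T}\frac{1}{|u-s|^{2\alpha H}}\;?
\]

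I should be precise about where the exponent $\alpha$ comes from, since that is the genuine content. Using $p_{s,u}(x,z)-p_{s,u}(y,z)=\int_y^x \partial_1 p_{s,u}(w,z)\,dw$ and then $\partial_z$ of that, Theorem \ref{multi point density} with $(k_1,k_2)=(1,1)$ gives $|\partial_1\partial_2 p_{s,u}|\le C|u-s|^{-2H}\cdot(\text{Gaussian in }z\text{-variable})$ after absorbing $t_1^{-(1+1)H}$; integrating the Gaussian in $z$ and then over the segment in $w$ costs another $|u-s|^{H}$ from the Gaussian normalization but yields the full factor $|x-y|^2|u-s|^{-H}$. That over-shoots what we need (it would give exponent $2$ in $|x-y|$, which is too strong to be consistent with a nontrivial singularity budget). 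The correct trade-off is the interpolation: bound the double difference by $\min\{|x-y|^2|u-s|^{-H},\, |x-y|\cdot|u-s|^{-2H},\, |u-s|^{-2H}\}$ and choose the combination that makes $\int\int |u-s|^{-p}<\infty$, i.e. $p<1$; with the bound $|x-y|^\alpha|u-s|^{-\alpha H}$ obtained by interpolating the $|x-y|^0$ and $|x-y|^1$ estimates (powers $|u-s|^{-2H}$ and $|u-s|^{-2H}$ — both the same!), one in fact gets $|x-y|^\alpha|u-s|^{-2H}$ and needs $2H<1$ only. Hmm — this suggests $\alpha$ can be taken up to $1$ but not beyond with this crude method, so the improvement to $\alpha<\tfrac1H-1$ must use that the Gaussian tail lets us integrate $z$ over $\IR$ and gain a power of $|u-s|^{H}$. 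So the sharp inequality to prove is
\[
\bigl|p_{s,u}(x,x)-p_{s,u}(x,y)-p_{s,u}(y,x)+p_{s,u}(y,y)\bigr|\le C\,|x-y|^\alpha\,|u-s|^{-(1+\alpha)H},
\]
obtained by interpolating the $\alpha=1$ estimate ($|x-y|\,|u-s|^{-2H}$, from one derivative in the $z$-slot plus a mean-value step in the $w$-slot, with the $z$-Gaussian integrated to cost $|u-s|^H$ so net $|u-s|^{-2H}\cdot|u-s|^{H}\cdot|u-s|^{-H}$... ) — and this needs $(1+\alpha)H<1$, i.e. $\alpha<\tfrac1H-1$, exactly the hypothesis. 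I would organize the write-up so this single interpolation inequality is the lemma's core, and the integrability $\int_a^T\!\int_a^T|u-s|^{-(1+\alpha)H}\,ds\,du<\infty$ under $(1+\alpha)H<1$ finishes it.

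The main obstacle, and where I would spend the most care, is precisely the bookkeeping of derivative-slots versus Gaussian-integration gains in Theorem \ref{multi point density}: each spatial derivative $\partial_{\xi_j}$ worsens the $|t_j-t_{j-1}|$ power by $H$, while integrating the corresponding Gaussian $e^{-|\xi_j|^{2\gamma}/(c|t_j-t_{j-1}|^{2\gamma^2})}$ over $\IR$ gives back a factor $|t_j-t_{j-1}|^{\gamma}$, and since $\gamma<H$ is arbitrary one recovers essentially $|t_j-t_{j-1}|^{H-}$. Balancing "how many derivatives to put on which variable" against "how much $|x-y|$ we extract" is the whole game, and getting the exponent up to the optimal $\tfrac1H-1$ (rather than just $1$) requires doing the interpolation at the level of the density difference, not the density itself. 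A secondary, more routine obstacle is justifying the $\eps\to 0$ passage to the delta-mass identity rigorously; this follows the template on pp.\ 42--43 of \cite{GH} together with Corollary \ref{v continuity}, so I would cite it rather than reprove it.
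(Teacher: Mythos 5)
Your overall route is the paper's route: realize $L^a(t,x)$ through the approximating family, reduce $\IE|L^a(t,x)-L^a(t,y)|^2$ to a double time integral of the second difference of the two-point density, bound that second difference in two ways (one spatial derivative versus two), and combine the two bounds so that the time singularity is integrable exactly when $\alpha<\tfrac1H-1$. The paper implements the combination by splitting the time integral at $|s-u|^H=|x-y|$ rather than by writing a single interpolated inequality, but these are the same estimate: interpolating the endpoints $|x-y|\,|u-s|^{-2H}$ and $|x-y|^2\,|u-s|^{-3H}$ with weights $2-\alpha$ and $\alpha-1$ gives precisely your target $C|x-y|^\alpha|u-s|^{-(1+\alpha)H}$.

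The genuine gap is that you never correctly pin down the second endpoint, and your two attempted derivations of it are wrong. Writing $p_{s,u}(z_1,z_2)=\tilde p_{s,u}(z_1,z_2-z_1)$ and differentiating, the chain rule produces $\partial_{z_1}\partial_{z_2}p=\partial_1\partial_2\tilde p-\partial_2^2\tilde p$; the dominant term is $\partial_2^2\tilde p$, which by Theorem \ref{multi point density} (with $k_2=2$, $d=1$) costs $(u-s)^{-(1+2)H}=(u-s)^{-3H}$, not the $(u-s)^{-2H}$ you assert from the $(k_1,k_2)=(1,1)$ case. So the correct two-derivative Taylor bound is $|x-y|^2(u-s)^{-3H}$ (the paper's \eqref{stuff230}), and your candidate $|x-y|^2|u-s|^{-H}$ is false — which you half-sense when you note it ``over-shoots.'' Likewise, the hoped-for gain of $|u-s|^{H}$ from ``integrating the Gaussian in $z$ over $\IR$'' is not available here: after the $\eps\to0$ limit the density is evaluated at the fixed points $x,y$, so there is no $z$-integration left to absorb the Gaussian normalization. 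Once you replace the second endpoint by $|x-y|^2(u-s)^{-3H}$, the interpolation gives $|x-y|^\alpha(u-s)^{-(1+\alpha)H}$ for $\alpha\in(1,2)$, the double time integral converges iff $(1+\alpha)H<1$, and your argument closes and coincides with the paper's. A minor further point: the paper justifies the passage to the limit by Fatou applied to the chosen $\liminf$ version of $L^a(t,x)$ (so only an upper bound for the limit is needed), which is slightly lighter than the full delta-mass identity you invoke from \cite{GH}, but either is acceptable given the construction preceding the lemma.
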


\begin{proof}
By our choice of $L^a(t,x)$ in the paragraph after (\ref{approximation L}), and  by Fatou's lemma, 
\begin{align*}\IE\left|L^a(t,x)-L^a(t,y)\right|^2&=\IE\ \lim_{\epsilon\downarrow0}\left|\frac{L^a(t,B(x, \epsilon))-L^a(t, B(y, \epsilon))}{C_d\epsilon}\right|^2\\
&\leq\liminf_{\epsilon\downarrow0} \frac{1}{C_d^2\epsilon^2}\IE\left|L^a(t,B(x, \epsilon))-L^a(t, B(y, \epsilon))\right|^2.\end{align*}

Let $p_{u,s}(z_1,z_2)$ be the joint density of $(X_u,X_s)$. By change of variables, we have
\begin{align*}
&\IE\left|L^a(t,B(x, \epsilon))-L^a(t, B(y, \epsilon))\right|^2
\\
=&2\IE \int_{s=a}^t\int_{u=a}^s \left(\mathbf{1}_{B(x, \epsilon)}(X_u)-\mathbf{1}_{B(y, \epsilon)}(X_u)\right)\left(\mathbf{1}_{B(x, \epsilon)}(X_s)-\mathbf{1}_{B(y, \epsilon)}(X_s)\right)dsdu 
\\
=&2\int_{s=a}^t\int_{u=a}^s \int_{\IR^2}\left(\mathbf{1}_{B(x, \epsilon)}(z_1)-\mathbf{1}_{B(y, \epsilon)}(z_1)\right)\left(\mathbf{1}_{B(x, \epsilon)}(z_2)-\mathbf{1}_{B(y, \epsilon)}(z_2)\right)p_{u,s}(z_1, z_2)dz_1dz_2dsdu 
\\
=&2\int^t_{s=a}\int^s_{u=a}\int \int_{z_1, z_2\in B(x, \epsilon)} \big[ p(z_1, z_2)-p(z_1-(x-y), z_2)
\\
&\quad\quad\quad\quad\quad\quad\quad\quad+p(z_1-(x-y), z_2-(x-y))-p(z_1, z_2-(x-y))\big]dz_2dz_1duds.
\end{align*}
For notation convenience, we introduce 
\begin{align*}
&F_{x,y}(z_1,z_2)\\
&\ \ =p(z_1, z_2)-p(z_1-(x-y), z_2)+p(z_1-(x-y), z_2-(x-y))-p(z_1, z_2-(x-y)).
\end{align*}
By Theorem \ref{multi point density} and Taylor expansion of $F_{x,y}(z_1, z_2)$ to the first and second order (with respect to $x-y$),  there exist a constant $c_1$ such that
\begin{equation}\label{stuff229}
F_{x,y}(z_1,z_2) \le |x-y|\frac{c_1}{|s-u|^{2H}},
\end{equation}
and 
\begin{equation}\label{stuff230}
F_{x,y}(z_1,z_2) \le |x-y|^2\frac{c_1}{|s-u|^{3H}}.
\end{equation}
Write
\begin{align}
&\IE\left|L^a(t,B(x, \epsilon))-L^a(t, B(y, \epsilon))\right|^2=2\int^t_{s=a}\int^s_{u=a}\int \int_{z_1, z_2\in B(x, \epsilon)} F_{x,y}(z_1, z_2)dz_2dz_1duds \nonumber
\\
=&2\int\int_{|s-u|^H>|x-y|}\int \int_{z_1, z_2\in B(x, \epsilon)} F_{x,y}(z_1, z_2)dz_2dz_1duds \nonumber
\\
&\quad+2\int\int_{|s-u|^H\le |x-y|}\int \int_{z_1, z_2\in B(x, \epsilon)} F_{x,y}(z_1, z_2)dz_2dz_1duds. \label{stuff227}
\end{align}
We take care of the two terms on the right hand side of \eqref{stuff227} separately. Let $\delta>0$ be a fixed constant satisfying $d+1+\delta<1/H$. For the first term, it holds
\begin{align}
&\int\int_{|s-u|^H>|x-y|}\int \int_{z_1, z_2\in B(x, \epsilon)} F_{x,y}(z_1, z_2)dz_2dz_1duds \nonumber
\\
\leq&\int\int_{|s-u|^H>|x-y|}\int \int_{z_1, z_2\in B(x, \epsilon)} |x-y|^2\frac{c_2}{|s-u|^{3H}} \;dz_1dz_2duds \quad\quad \quad\quad[\,\text{by}\ (\ref{stuff230})\,] \nonumber
\\
\leq &\ \ c_2\,\epsilon^{2}|x-y|^{1+\delta}\int\int_{|s-u|^H>|x-y|}\frac{1}{|s-u|^{(2+\delta)H}}\;duds \le c_2\,\epsilon^{2}|x-y|^{1+\delta}
\label{stuff238}
\end{align}
Now for the second term on the right hand side of \eqref{stuff227}, due to \eqref{stuff229} it holds
\begin{align}
&\quad \int\int_{|s-u|^H\le |x-y|}\int \int_{z_1, z_2\in B(x, \epsilon)} F_{x,y}(z_1, z_2)dz_2dz_1duds \nonumber
\\
\leq& \int\int_{|s-u|^H\le |x-y|}\int \int_{z_1, z_2\in B(x, \epsilon)} |x-y|\frac{c_3}{|s-u|^{(d+1)H}}\;dz_1dz_2duds \quad\quad\quad[\,\text{by}\ (\ref{stuff229})\,]\nonumber
\\
\le&\  c_3\,\epsilon^{2}|x-y|\int\int_{|s-u|^H\le |x-y|} \frac{1}{|s-u|^{2H}}\;duds \nonumber
\\
\le&\  c_3\, \epsilon^{2}|x-y|\cdot |x-y|^{(1-2H)/H}=c_3\,\epsilon^{2}|x-y|^{\frac{1}{H}-1},
\end{align}
where $\frac{1}{H}-1>1+\delta$. Therefore, the proof is complete by choosing $\alpha=1+\delta$ in the statement of the theorem. 
\end{proof}
It follows immediately by Kolmogorov continuity theorem that, when $d=1$ and $H<1/2$, there exists a modification of $L^a(t, x)$ that is $\alpha$-H\"{o}lder continuous in $x$ for any $\alpha<\frac{1}{2H}-1$.

\bigskip

In the following discussion, we seek to establish the H\"{o}lder regularity of $L^a(t,x)$ in the time variable. We start with two technical lemmas which correspond to \eqref{127305} and \eqref{127306} in Theorem \ref{Berman}
\begin{lemma}\label{126456}
Assume $dH<1$. For all $x\in \IR^d$, and even $n\in \mathbb{N}$, there exists some positive constant $C_2$ (depending on $a$ and $n$) such that for all $x\in \IR^d$, all $a<s<t\le T$,
\begin{equation}
\IE\left|L^a(t,x)-L^a(s, x)\right|^{n}\le C_2|t-s|^{(1-dH)(n-1)+1}. 
\end{equation}
\end{lemma}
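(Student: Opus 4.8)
The plan is to expand the $n$-th moment as an iterated time integral over the density estimate from Theorem \ref{multi point density} and then extract the claimed power of $|t-s|$ by a careful change of variables. First I would write, using the version of $L^a$ fixed after \eqref{approximation L} together with Fatou's lemma,
\[
\IE\left|L^a(t,x)-L^a(s,x)\right|^n \le \liminf_{\epsilon\downarrow 0}\frac{1}{(C_d\epsilon^d)^n}\,\IE\left[\left(\int_s^t \mathbf{1}_{B(x,\epsilon)}(X_r)\,dr\right)^n\right].
\]
Expanding the $n$-th power and symmetrizing over the simplex $\{s<r_1<\cdots<r_n<t\}$, the expectation becomes
\[
n!\int_{s<r_1<\cdots<r_n<t}\int_{(B(x,\epsilon))^n} p_{r_1,\dots,r_n}(z_1,\dots,z_n)\,dz_1\cdots dz_n\,dr_1\cdots dr_n.
\]
After dividing by $(C_d\epsilon^d)^n$ and letting $\epsilon\downarrow 0$, the inner spatial integral tends to $p_{r_1,\dots,r_n}(x,\dots,x)$, so it remains to bound $\int_{s<r_1<\cdots<r_n<t} p_{r_1,\dots,r_n}(x,\dots,x)\,dr_1\cdots dr_n$.

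Next I would invoke the identity $p_{r_1,\dots,r_n}(x,\dots,x)=\tilde p_{r_1,\dots,r_n}(x,0,\dots,0)$ and apply Theorem \ref{multi point density} with all $k_i=0$ and all $\xi_i=0$: this gives
\[
p_{r_1,\dots,r_n}(x,\dots,x)\le C_1\prod_{j=2}^{n}\frac{1}{(r_j-r_{j-1})^{dH}},
\]
where the factor coming from $r_1$ has been absorbed into $C_1$ since $r_1\ge a>0$ (this is exactly the content of the Remark following Theorem \ref{multi point density}). Substituting this bound, the time integral factorizes along the simplex, and with the change of variables $w_j=r_j-r_{j-1}$ (and $w_1=r_1$, which is harmless because it does not appear in the integrand) one is left with
\[
\IE\left|L^a(t,x)-L^a(s,x)\right|^n \le n!\,C_1\int_{\substack{w_2,\dots,w_n>0\\ w_2+\cdots+w_n<t-s}}\ \prod_{j=2}^{n}\frac{dw_j}{w_j^{dH}}.
\]
Since $dH<1$, each one-dimensional integral $\int_0^{\cdot} w^{-dH}\,dw$ converges, and a standard induction (or a Beta-function computation) on the $(n-1)$-dimensional region $\{w_2+\cdots+w_n<t-s\}$ shows this integral is bounded by a constant times $(t-s)^{(n-1)(1-dH)}$. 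Finally, since $r_1$ ranges over an interval of length at most $t-s$ and we dropped that factor by absorbing it into $C_1$ only because $r_1\ge a$ — here I should instead keep one factor of $(t-s)$ coming from integrating $w_1=r_1$ over a subinterval of $[s,t]$ — this yields the extra $+1$ in the exponent, giving the bound $C_2|t-s|^{(1-dH)(n-1)+1}$ as claimed.

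The main obstacle is the bookkeeping in the last step: one must make sure the $(t-s)$-scaling is accounted for correctly, i.e. that the $r_1$-integration genuinely contributes one clean factor of $|t-s|$ while the remaining $n-1$ increments each contribute a factor $|t-s|^{1-dH}$, and that the combinatorial constant from symmetrization and the induction over the simplex stays finite (it depends on $n$ and $dH$ only, which is permitted). Verifying the multidimensional integral estimate $\int_{\{w_2+\cdots+w_n<T'\}}\prod_{j=2}^n w_j^{-dH}\,dw_j \le C_n (T')^{(n-1)(1-dH)}$ is routine via iterated integration starting from the innermost variable, using $\int_0^{A} w^{-dH}(A')^{\text{something}}\,dw$-type bounds, but it must be done carefully to get the exponent exactly right. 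Everything else — the Fatou step, the symmetrization, the passage $\epsilon\downarrow 0$, and the application of Theorem \ref{multi point density} — is standard and parallels the treatment of Gaussian local times in \cite{GH}.
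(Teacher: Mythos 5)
Your proposal is correct and follows essentially the same route as the paper: Fatou plus symmetrization over the simplex, the density bound of Theorem \ref{multi point density} with all $k_i=0$ (the $u_1$-singularity absorbed since $u_1\ge a$), and then the observation that each of the $n-1$ increments contributes $|t-s|^{1-dH}$ while the remaining time variable contributes a clean factor $|t-s|$. The only (immaterial) difference is that you evaluate the simplex integral by the change of variables to increments and a Dirichlet-type estimate, whereas the paper integrates the variables out one at a time using $(u_{j}-s)^{1-dH}\le (u_{j+1}-s)^{1-dH}$.
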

\begin{proof}
As before, we only need to find an upper bound for the following:
\begin{align*}
&\qquad \IE\left|L^a(t,B(x, \epsilon))-L^a(s, B(x, \epsilon))\right|^{n} 
\\
&=n! \IE  \int_{u_n=s}^t \int_{u_{n-1}=s}^{u_n}\cdots \int_{u_1=s}^{u_2}\mathbf{1}_{B(x, \epsilon)}(X_{u_1})\cdots \mathbf{1}_{B(x, \epsilon)}(X_{u_n})du_1\cdots du_n 
\\
&=n! \IE  \int_{u_n=s}^t \int_{u_{n-1}=s}^{u_n}\cdots \int_{u_1=s}^{u_2} \int_{z_1, \cdots z_n\in B(x, \epsilon)}p_{u_1, \cdots, u_n}(z_1, \cdots, z_n)dz_1\cdots dz_ndu_1\cdots du_n.
\end{align*}
In the above, $p_{u_1,...,u_n}(z_1,\cdots,z_n)$ is the joint density of $(X_{u_1},...,X_{u_n})$. By Theorem \ref{multi point density}, clearly we have that for some $c_1>0$ (which may change from line to line), 
$$p_{u_1,...,u_n}(z_1,\cdots,z_n)\leq c_1\frac{1}{(u_n-u_{n-1})^{dH}}\cdots \frac{1}{(u_2-u_1)^{dH}},$$
 Hence the right hand side above is upper bounded by
\begin{align}
&c_1n!\epsilon^{nd}\int_{u_n=s}^t \int_{u_{n-1}=s}^{u_n}\cdots \int_{u_1=s}^{u_2}\; \frac{1}{(u_n-u_{n-1})^{dH}}\cdots \frac{1}{(u_2-u_1)^{dH}}\;du_1\cdots du_n \nonumber
\\
\le &c_1n!\epsilon^{nd}\int_{u_n=s}^t \int_{u_{n-1}=s}^{u_n}\cdots \int_{u_2=s}^{u_3}\;\; \frac{1}{(u_n-u_{n-1})^{dH}}\cdots \frac{1}{(u_3-u_2)^{dH}}(u_2-s)^{1-dH}\; du_2\cdots du_n \nonumber
\\
\le &c_1n!\epsilon^{nd}\int_{u_n=s}^t \int_{u_{n-1}=s}^{u_n}\cdots \int_{u_2=s}^{u_3}\;\; \frac{1}{(u_n-u_{n-1})^{dH}}\cdots \frac{1}{(u_3-u_2)^{dH}}(u_3-s)^{1-dH}\; du_2\cdots du_n \nonumber
\\
\le &c_1n!  \epsilon^{nd}\int_{u_n=s}^t \int_{u_{n-1}=s}^{u_n}\cdots \int_{u_2=s}^{u_3}\;\; \frac{1}{(u_n-u_{n-1})^{dH}}\cdots \frac{1}{(u_4-u_3)^{dH}}(u_3-s)^{2(1-dH)}\; du_3\cdots du_n \nonumber
\\
\le &\cdots 
\le c_1n! \epsilon^{nd}\int_{u_n=s}^t (u_n-s)^{(1-dH)(n-1)}du_n 
\le c_1n! \epsilon^{nd}(t-s)^{(1-dH)(n-1)+1} \label{stuff}
\end{align}
The proof is complete.
\end{proof}

Again on account of Kolmogorov continuity theorem, there exists a modification of $L^a(t, x)$ being $\beta$-H\"{o}lder continuous in $t$ for any $\beta<1-dH$. The next lemma is needed for the uniform  H\"{o}lder continuity of the local time in the time variable (uniform in $x$). 
\begin{lemma}\label{proving-uniform-holder}
Assume $d=1$ and $H\in(1/4,1/2)$. For any $\delta\in (0,\frac{1}{H}-2)$ and any even integer $n\in \mathbb{N}$, there exists some $C_3>0$ such that
\begin{equation}\label{126221}
\IE\left[L^a([s,t],x)-L^a([s,t],y)\right]^n\le C_3|x-y|^{1+\delta}\cdot |t-s|^{1+(1-H)(n-3)},\end{equation}
 {for all } $a\le s<t\le T$ and $ x,y\in \IR. $

\end{lemma}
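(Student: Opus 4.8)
The plan is to rerun the argument of Lemma~\ref{proving-holder-space} for a general even $n$, dragging along the extra $n-2$ time integrations in the spirit of Lemma~\ref{126456}. First I would reduce, exactly as in those two proofs (Fatou's lemma, then in each term of the product expansion the change of variables recentring the balls $B(y,\epsilon)$ at $x$, then the cancellation of the powers of $\epsilon$), the estimate of $\IE\big(L^a([s,t],x)-L^a([s,t],y)\big)^n$ to bounding
\[
n!\int_{s<u_1<\dots<u_n<t}\sup_{z_1,\dots,z_n}\big|G_{x,y}(z_1,\dots,z_n)\big|\;du_1\cdots du_n,\qquad
G_{x,y}(z)=\sum_{S\subseteq\{1,\dots,n\}}(-1)^{|S|}\,p_{u_1,\dots,u_n}\big(z-(x-y)e_S\big),
\]
where $p_{u_1,\dots,u_n}$ is the joint density of $(X_{u_1},\dots,X_{u_n})$, $e_S=\sum_{j\in S}e_j$, and $G_{x,y}$ is thus the $n$-fold mixed finite difference of $p_{u_1,\dots,u_n}$ with step $x-y$ in each coordinate. (Evenness of $n$ is used to expand the $n$-th power into a product of indicators.)

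Next I would estimate $G_{x,y}$. From $p_{u_1,\dots,u_n}(z)=\tilde p_{u_1,\dots,u_n}(z_1,z_2-z_1,\dots,z_n-z_{n-1})$, the chain rule, and Theorem~\ref{multi point density} (dropping the Gaussian factors and absorbing $u_1^{-(1+\cdot)H}$ into the constant since $u_1\ge a$), one gets, with $d=1$ and the convention $a_0=a_{n+1}=0$,
\[
\big|\partial_{z_1}^{a_1}\cdots\partial_{z_n}^{a_n}p_{u_1,\dots,u_n}\big|\;\le\; c\prod_{i=2}^{n}|u_i-u_{i-1}|^{-(1+a_{i-1}+a_i)H}.
\]
Applying the mean value theorem to $G_{x,y}$ in the first coordinate alone, and separately in the first $m$ coordinates — where $m:=2$ if $\delta\le 1$ and $m:=3$ if $\delta>1$, the latter case forcing $1/H>3$, hence $3H<1$ — yields the two bounds
\[
|G_{x,y}|\le c\,|x-y|\,\frac{1}{|u_2-u_1|^{2H}}\prod_{i=3}^{n}\frac{1}{|u_i-u_{i-1}|^{H}},\qquad
|G_{x,y}|\le c\,|x-y|^{m}\,\prod_{i=2}^{m}\frac{1}{|u_i-u_{i-1}|^{3H}}\cdot\frac{1}{|u_{m+1}-u_m|^{2H}}\prod_{i=m+2}^{n}\frac{1}{|u_i-u_{i-1}|^{H}} .
\]

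Then I would split $\{s<u_1<\dots<u_n<t\}$ into $A=\{|u_2-u_1|^H\le|x-y|\}$ and $A^c$, as in Lemma~\ref{proving-holder-space}. On $A$ the first bound is used: integrating $u_1$ over the window of length $|x-y|^{1/H}$ yields a factor $c\,|x-y|^{(1-2H)/H}=c\,|x-y|^{1/H-2}$ (here $2H<1$), and the remaining variables $u_2,\dots,u_n$, carrying only $|u_i-u_{i-1}|^{-H}$ singularities, are integrated by the nested Beta-function recursion of Lemma~\ref{126456}, producing $|t-s|^{1+(1-H)(n-2)}$; thus $A$ contributes $\le c\,|x-y|^{1/H-1}|t-s|^{1+(1-H)(n-2)}$, which is $\le C|x-y|^{1+\delta}|t-s|^{1+(1-H)(n-3)}$ because $\delta<1/H-2$ (so $1/H-1>1+\delta$) and the bounded excess powers of $|x-y|$ and $|t-s|$ are absorbed into $C$. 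On $A^c$ the second bound is combined with $|x-y|^{m-1-\delta}\le|u_2-u_1|^{(m-1-\delta)H}$ — valid since $0\le m-1-\delta$ (for $m=2$ this is $\delta\le1$; for $m=3$ it reads $\delta\le2$, true as $\delta<1/H-2<2$) — to replace $|x-y|^m$ by $|x-y|^{1+\delta}$ at the cost of raising the exponent of $|u_2-u_1|$ from $3H$ to $(4-m+\delta)H\le(2+\delta)H$, still $<1$ since $\delta<1/H-2$; every remaining exponent ($3H$, present only when $m=3$, where $H<1/3$; $2H$; $H$) is also $<1$. A final Beta-function integration — all of whose exponents exceed $-1$ — then produces a power of $|t-s|$ whose exponent, by a routine computation using $(4+\delta)H<(2+\delta)H+2H<2$ (case $m=2$) resp. $(5+\delta)H<3H+1<2$ (case $m=3$), is at least $1+(1-H)(n-3)$. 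Adding the $A$ and $A^c$ contributions completes the proof.

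The step I expect to be the real obstacle is this last balancing on $A^c$: recovering the full power $|x-y|^{1+\delta}$ forces us to differentiate the joint density in $m=\lceil 1+\delta\rceil$ of the $z$-variables, which creates the extra $|u_i-u_{i-1}|^{-3H}$ and $|u_i-u_{i-1}|^{-2H}$ singularities, and one must verify that $d=1$ together with $\delta<1/H-1-d$ keeps all of them integrable \emph{and} still leaves the iterated time integral enough margin to carry the exponent $1+(1-H)(n-3)$. The remaining ingredients — the finite-difference representation and the invocation of Theorem~\ref{multi point density} — are routine, as is (though somewhat tedious) the bookkeeping of the nested elementary integrals for general even $n$.
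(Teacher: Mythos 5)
Your proposal is correct and follows essentially the same strategy as the paper's proof: a finite-difference representation of the $n$-th moment through the joint density, the derivative bounds of Theorem \ref{multi point density}, interpolation between a first-order and a higher-order Taylor estimate, a split of the time simplex at $|u_2-u_1|^{H}=|x-y|$ (the paper splits at $|u_3-u_2|^{H}=|x-y|$ inside its $J_1/J_2$ decomposition), and the nested Beta-type integrations of Lemma \ref{126456}. The only substantive divergences are that you take the full $n$-fold mixed difference of $p_{u_1,\dots,u_n}$ while exploiting only its first $m$ coordinates, whereas the paper differences only in $z_1,z_2$ and keeps the remaining factors as raw indicator differences, and that you explicitly treat $\delta>1$ with a third derivative --- a case the paper's passage from $|x-y|^{2}(u_3-u_2)^{-3H}$ to $|x-y|^{1+\delta}(u_3-u_2)^{-(2+\delta)H}$ silently requires $\delta\le 1$ to cover.
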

\begin{proof}
Denote by $$G_{x,y;\epsilon}(z)=\mathbf{1}_{B(x,\epsilon)}(z)-\mathbf{1}_{B(y,\epsilon)}(z).$$
We divide the proof into three steps.

\bigskip
\noindent{\it\large Step 1}\,:\ \ 
For any $\epsilon>0$, the following holds:
\begin{align}
&\IE\left[L^a\left([s,t],B(x,\epsilon)\right)-L^a\left([s,t],B( y, \epsilon)\right)\right]^n \nonumber
\\
=&n! \IE  \int_{s\leq u_1\leq\cdots\leq u_n\leq t} G_{x,y;\epsilon}(X_{u_1})\cdots G_{x,y;\epsilon}(X_{u_n})\,   du_1\cdots du_n \nonumber
\\
=&n!  \int_{s\leq u_1\leq\cdots\leq u_n\leq t}du_1\cdots du_n\,  \IE\bigg[\mathbf{1}_{B(x, \epsilon)}(X_{u_1})\mathbf{1}_{B(x, \epsilon)}(X_{u_2})-\mathbf{1}_{B(y, \epsilon)}(X_{u_1})\mathbf{1}_{B(y, \epsilon)}(X_{u_2}) \nonumber
\\
&\quad\quad\quad -\mathbf{1}_{B(y, \epsilon)}(X_{u_1})\mathbf{1}_{B(x, \epsilon)}(X_{u_2})+\mathbf{1}_{B(y, \epsilon)}(X_{u_1})\mathbf{1}_{B(y, \epsilon)}(X_{u_2})\bigg] G_{x,y;\epsilon}(X_{u_3})\cdots G_{x,y;\epsilon}(X_{u_n}).\nonumber
\end{align}
We first estimate the probability expectation in the above. Note
\begin{align}
&\IE\bigg[\mathbf{1}_{B(x, \epsilon)}(X_{u_1})\mathbf{1}_{B(x, \epsilon)}(X_{u_2})-\mathbf{1}_{B(y, \epsilon)}(X_{u_1})\mathbf{1}_{B(y, \epsilon)}(X_{u_2}) \nonumber
\\
&\quad\quad\quad -\mathbf{1}_{B(y, \epsilon)}(X_{u_1})\mathbf{1}_{B(x, \epsilon)}(X_{u_2})+\mathbf{1}_{B(y, \epsilon)}(X_{u_1})\mathbf{1}_{B(y, \epsilon)}(X_{u_2})\bigg] G_{x,y;\epsilon}(X_{u_3})\cdots G_{x,y;\epsilon}(X_{u_n})\nonumber
\\
=&\int_{\IR^n}\bigg[\mathbf{1}_{B(x, \epsilon)}(z_1)\mathbf{1}_{B(x, \epsilon)}(z_2)-\mathbf{1}_{B(y, \epsilon)}(z_1)\mathbf{1}_{B(y, \epsilon)}(z_2) -\mathbf{1}_{B(y, \epsilon)}(z_1)\mathbf{1}_{B(x, \epsilon)}(z_2)\nonumber\\
&\quad\quad\quad+\mathbf{1}_{B(y, \epsilon)}(z_1)\mathbf{1}_{B(y, \epsilon)}(z_2)\bigg] 
 G_{x,y;\epsilon}(X_{u_3})\cdots G_{x,y;\epsilon}(X_{u_n}) p_{u_1, \cdots, u_n}(z_1, \cdots, z_n)dz_1\cdots dz_n, \label{126253}
\end{align}
where $p_{u_1, \cdots u_n}(z_1, \cdots z_n)$ is the joint density for $(X_{u_1},...,X_{u_n})$. For notation convenience, we denote by $\bar{z}=(z_3,...,z_n)$, and write $p(z_1,z_2,\bar{z})=p_{u_1,...,u_n}(z_1,...,z_n)$. By doing change of variable three times in \eqref{126253} to make all the terms in the square brackets equal to $\mathbf{1}_{B(x, \epsilon)}(z_1)\mathbf{1}_{B(x, \epsilon)}(z_2)$, we can write the last display of \eqref{126253} as 
\begin{align*}
&\int_{\IR^n}\mathbf{1}_{B(x, \epsilon)}(z_1)\mathbf{1}_{B(x, \epsilon)}(z_2)G_{x,y;\epsilon}(z_3)\cdots G_{x,y;\epsilon}(z_n) F_{x,y}(z_1,z_2,\bar{z})dz_1 dz_2d\bar{z}, 
\end{align*}
where
\begin{align*}
&F_{x,y}(z_1,z_2,\bar{z})\\
=&p(z_1, z_2,\bar{z})-p(z_1, z_2-(x-y),\bar{z})-p(z_1-(x-y),z_2,\bar{z})+p(z_1-(x-y),z_2-(x-y),\bar{z}).
\end{align*}
Hence we have for any even integer $n$,
\begin{align}
&\quad \IE\left[L^a\left([s,t],B(x,\epsilon)\right)-L^a\left([s,t],B( y, \epsilon)\right)\right]^n \nonumber 
\\
&\le n! \int_{s\leq u_1\leq\cdots\leq u_n\leq t}du_1\cdots du_n\nonumber
\\ &\quad\quad\ \ \int_{\IR^n}\mathbf{1}_{B(x, \epsilon)}(z_1)\mathbf{1}_{B(x, \epsilon)}\, (z_2)|G_{x,y;\epsilon}(z_3)\cdots G_{x,y;\epsilon}(z_n)|
\cdot|F_{x,y}(z_1,z_2,\bar{z})| dz_1dz_2d\bar{z}.  \label{126404}
\end{align} 

\bigskip
\noindent{\it\large Setp 2}\,:\ \ Applying Taylor's expansion to the first and second order, and by the estimates of derivatives of the  joint density $p(z_1,z_2,\bar{z})$ (Theorem \ref{multi point density}), it can be seen immediately that there exist constants $c_i$ such that the following upper bounds hold.
\begin{equation}\label{126408}
|F_{x,y}(z_1, z_2,\bar{z})| \le c_1|x-y|\left(\frac{1}{(u_3-u_2)^H}+\frac{1}{(u_2-u_1)^H}\right)\frac{1}{(u_n-u_{n-1})^{H}}\cdots \frac{1}{(u_2-u_1)^{H}};
\end{equation}
and
\begin{equation}\label{126411}
|F_{x,y}(z_1, z_2,\bar{z})| \le c_2|x-y|^2\left(\frac{1}{(u_3-u_2)^{2H}}+\frac{1}{(u_2-u_1)^{2H}}\right)\frac{1}{(u_n-u_{n-1})^{H}}\cdots \frac{1}{(u_2-u_1)^{H}}.
\end{equation}
In order to get \eqref{126411} as above, we have  used the elementary inequality  $$(u_3-u_2)(u_2-u_1)\ge \min\{(u_3-u_2)^2, (u_2-u_1)^2\}$$ for the cross term.  

\bigskip
\noindent{\it\large Setp 3}\,:\ \ We split the right hand-side of \eqref{126404} into $J_1+J_2$ where
\begin{align}
 J_1=n! &\int_{\{s\le u_1<\cdots <u_n\le t\}\cap \{u_2-u_1\ge u_3-u_2\}}du_1\cdots du_n\nonumber\\
 &\int_{\IR^n} \mathbf{1}_{B(x, \epsilon)}(z_1)\mathbf{1}_{B(x, \epsilon)}\, (z_2)|G_{x,y;\epsilon}(z_3)\cdots G_{x,y;\epsilon}(z_n)| |F_{x,y}(z_1,z_2,\bar{z})| dz_1dz_2d\bar{z}, \label{126422}
\end{align}
and $J_2$ is defined similarly for  $\{u_2-u_1<u_3-u_2\}$.

In the following, we only show how to bound $J_1$, as $J_2$ can be treated in a similar manner. Below, the positive constants $c_i$ ($1\le i\le 5$) may change from line to line.  From our bound of $F_{x,y}(z_1,z_2,\bar{z})$, we clearly have
\begin{align}
&J_1\nonumber\\
\le& c_1\epsilon^{n} \int_{\left\{{s\le u_1<\cdots< u_n\le t\atop u_2-u_1\ge u_3-u_2}\right\}}\left( \frac{|x-y|}{(u_3-u_2)^H} \wedge \frac{|x-y|^2}{(u_3-u_2)^{2H}}\right)\frac{1}{(u_n-u_{n-1})^{H}}\cdots \frac{1}{(u_2-u_1)^{H}}du_1\cdots du_n\nonumber
\\
\le& c_1\epsilon^{n} \int_{\{s\le u_1<\cdots <u_n\le t\}}\left( \frac{|x-y|}{(u_3-u_2)^H} \wedge \frac{|x-y|^2}{(u_3-u_2)^{2H}}\right)\frac{1}{(u_n-u_{n-1})^{H}}\cdots \frac{1}{(u_2-u_1)^{H}}du_1\cdots du_n\nonumber 
\\
\le& c_1\epsilon^{n}(t-s)^{1-H} \int_{\{s\le u_3<\cdots< u_n\le t\}}du_3\cdots du_n\,\frac{1}{(u_n-u_{n-1})^{H}}\cdots \frac{1}{(u_4-u_3)^{H}} \nonumber
\\
& \qquad\qquad \qquad \qquad \ \  \int_{u_2=s}^{u_3}\left( \frac{|x-y|}{(u_3-u_2)^H} \wedge \frac{|x-y|^2}{(u_3-u_2)^{2H}}\right)\frac{1}{|u_3-u_2|^{H}} du_2. \label{126454}
\end{align}
For the integral with respect to $u_2$ in the last display of \eqref{126454}, we have for any $\delta<\frac{1}{H}-2$,
\begin{align}
&\quad  \int_{u_2=s}^{u_3}\left( \frac{|x-y|}{(u_3-u_2)^H} \wedge \frac{|x-y|^2}{(u_3-u_2)^{2H}}\right)\frac{1}{|u_3-u_2|^{H}}du_2\nonumber 
\\
&=\int_{(u_3-u_2)^H<|x-y|}\left( \frac{|x-y|}{(u_3-u_2)^H} \wedge \frac{|x-y|^2}{(u_3-u_2)^{2H}}\right)\frac{1}{|u_3-u_2|^{H}}du_2 \nonumber
\\
&\qquad \qquad \qquad \quad   +\int_{(u_3-u_2)^H\ge |x-y|}\left( \frac{|x-y|}{(u_3-u_2)^H} \wedge \frac{|x-y|^2}{(u_3-u_2)^{2H}}\right)\frac{1}{|u_3-u_2|^{H}}du_2 \nonumber 
\\
&\le \int_{(u_3-u_2)^H<|x-y|} \frac{|x-y|}{(u_3-u_2)^{2H}} du_2+\int_{(u_3-u_2)^H\ge |x-y|}\frac{|x-y|^2}{(u_3-u_2)^{3H}}du_2  \nonumber
\\
&\le |x-y|\cdot |x-y|^{\frac{1}{H}[1-2H]}+\int_{(u_3-u_2)^H\ge |x-y|}\frac{|x-y|^{(1+\delta)}}{(u_3-u_2)^{(2+\delta)H}}du_2  \nonumber
\\
&\le \left(|x-y|^{\frac{1}{H}-1}+|x-y|^{1+\delta}\right) \le |x-y|^{1+\delta}.\label{126543}
\end{align}
To complete the computation for \eqref{126454}, by following the same line of computation as the proof of Lemma \ref{126456}, we obtain
\begin{equation}\label{126455}
\int_{\{s\le u_3<\cdots u_n\le t\}}\frac{1}{(u_n-u_{n-1})^{dH}}\cdots \frac{1}{(u_4-u_3)^{dH}}du_3\cdots du_n\le c_2(t-s)^{(1-dH)(n-3)+1}.
\end{equation}
Combining \eqref{126543} and \eqref{126455}, we have shown that for any $\delta<\frac{1}{H}-2$, there exists some constant $c_3>0$
\begin{align}
J_1\le c_3\epsilon^{n}|x-y|^{1+\delta}(t-s)^{(1-dH)(n-3)+1}.\label{126545}
\end{align}
Following same line of argument as above, it holds for some $c_4>0$ that
\begin{align}
J_2\le c_4\epsilon^n|x-y|^{1+\delta}(t-s)^{(1-dH)(n-3)+1}.\label{126603}
\end{align}
Therefore we have established the following desired upper bound for \eqref{126404}: 
\begin{equation}\label{126610}
\IE\left[L^a\left([s,t],B(x,\epsilon)\right)-L^a\left([s,t],B( y, \epsilon)\right)\right]^n \le c_5\epsilon^n|x-y|^{1+\delta}(t-s)^{(1-H)(n-3)+1}.
\end{equation}
The proof is thus complete.
\end{proof}

\begin{remark}
Clearly, Lemma \ref{proving-holder-space} is a special case of Lemma \ref{proving-uniform-holder}. The reason that we keep Lemma \ref{proving-holder-space} is because the proof of this simpler result provides a better idea of the  more complex proof of  Lemma \ref{proving-uniform-holder}.
We also remark that by taking $s=a$ in Lemma \ref{proving-uniform-holder}, we have for any $\delta\in(0,\frac{1}{H}-2)$ and even $n\in\mathbb{N}$, there exists a positive constant $C_4$ depending on $T$ and $a$ such that
\begin{align}\label{space holder n}
\IE\left[L^a(t,x)-L^a(t,y)\right]^n\le C_4|x-y|^{1+\delta}.
\end{align}
\end{remark}

\bigskip

Denote by $I_R=[-R,R]\subset\IR$. Now that all the three conditions in Theorem \ref{Berman} have been established in the three lemmas above, the following theorem results from Theorem \ref{Berman} naturally.
\begin{theorem}\label{uniform-holder}
Assume $d=1$ and $H<1/2$. For every $\beta<1-H$,  there exists a version of the local time $L^a(t,x)$ jointly continuous in $(t,x)\in[a,T]\times\IR$. Moreover, there exist random variables $\eta$ and $\Delta$ which are almost surely positive and finite such that
\begin{align*}
|L^a(t+h,x)-L^a(t,x)|\le \Delta|h|^{\beta}, \text{ for all } x,t,h \text{ satisfying } t,t+h\in [a,T], \text{ and }|h|<\eta.
\end{align*}
\end{theorem}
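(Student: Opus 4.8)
The plan is to derive Theorem~\ref{uniform-holder} as a direct application of Theorem~\ref{Berman} to the two-parameter process $Y(t,x) = L^a(t,x)$ on $[a,T]\times I_R$, using the three preceding lemmas to verify the three hypotheses \eqref{127303}, \eqref{127305}, \eqref{127306}. Concretely, I would fix an even integer $n$ large enough that all three exponents appearing below exceed $1$, then invoke: inequality \eqref{space holder n} (the $s=a$ specialization of Lemma~\ref{proving-uniform-holder}) to get $\IE|L^a(t,x+k)-L^a(t,x)|^n \le C|k|^{1+\delta}$ for $t\in\{a,T\}$, which is \eqref{127303} with $c=\delta$; Lemma~\ref{126456} (with $d=1$) to get $\IE|L^a(t+h,x)-L^a(t,x)|^n \le C|h|^{(1-H)(n-1)+1}$, which is \eqref{127305} with $1+d$-exponent equal to $(1-H)(n-1)+1$; and Lemma~\ref{proving-uniform-holder} itself (applied on the interval $[t,t+h]$, noting $L^a([t,t+h],x) = L^a(t+h,x)-L^a(t,x)$ by additivity of occupation measure in the time variable) to get the mixed increment bound \eqref{127306} with $|k|$-exponent $1+\delta$ and $|h|$-exponent $(1-H)(n-3)+1$.

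Next I would check that the exponents can be made large enough simultaneously: since $H<1/2$ we have $1-H>1/2$, so $(1-H)(n-1)+1 \to\infty$ and $(1-H)(n-3)+1\to\infty$ as $n\to\infty$, while $\delta$ can be taken as any number in $(0,\tfrac1H-2)$, which is a nonempty interval precisely because $H<1/2$. Thus for $n$ even and large, Theorem~\ref{Berman} applies with, say, $c=\delta$ and $d_{\text{Berman}} = (1-H)(n-3)+1 - 1 = (1-H)(n-3)$ (taking the smaller of the two time exponents to be safe), yielding a jointly continuous version of $L^a(t,x)$ on $[a,T]\times I_R$ together with the quantitative modulus: $|L^a(t+h,x)-L^a(t,x)|\le \Delta|h|^\gamma$ for all $\gamma < d_{\text{Berman}}/n = (1-H)(n-3)/n$. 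Letting $n\to\infty$ along even integers, $(1-H)(n-3)/n \to 1-H$, so any prescribed $\beta<1-H$ is attainable by choosing $n$ large; a standard patching argument over a countable exhaustion $I_R$, $R\in\mathbb{N}$, upgrades the statement from $I_R$ to all of $\IR$.

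The only genuine subtlety — and the step I expect to require the most care — is the bookkeeping that turns Theorem~\ref{Berman}'s conclusion (stated for a single large $n$ and giving H\"older exponents strictly below $d_{\text{Berman}}/n$) into a statement valid for \emph{every} $\beta<1-H$ with a single version of the local time. One must first produce, for each fixed $R$ and each large even $n$, a jointly continuous version; then observe that any two such versions agree almost surely on a dense set and hence everywhere by continuity, so they coincide; this common version then enjoys the H\"older modulus for all $\beta<1-H$ simultaneously (with $\eta,\Delta$ possibly depending on $\beta$ and $R$, which is all that is claimed). The second point to be careful about is that Lemma~\ref{proving-uniform-holder} is stated for even $n$ and gives $|t-s|^{1+(1-H)(n-3)}$ rather than $|t-s|^{1+(1-H)(n-1)}$ — the weaker time exponent $(1-H)(n-3)$ is the one that controls the final H\"older exponent, but since it still tends to $\infty$ this costs nothing in the limit. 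Everything else is a direct quotation of the three lemmas and of Theorem~\ref{Berman}, so no new estimate is needed.
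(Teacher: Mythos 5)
Your proposal is correct and follows essentially the same route as the paper: apply Theorem \ref{Berman} to $Y(t,x)=L([a,a+t],x)$ with hypotheses supplied by (\ref{space holder n}), Lemma \ref{126456}, and Lemma \ref{proving-uniform-holder}, let the even integer $n\to\infty$ so that the resulting H\"older exponent $(1-H)(n-3)/n$ approaches $1-H$, and patch the versions over a countable exhaustion of $\IR$ by compact intervals. Your extra bookkeeping on the exponents and on the a.s.\ agreement of the versions obtained for different $n$ is more explicit than the paper's one-line invocation, but it is the same argument.
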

\begin{proof}
Fix $R>0$. Apply Theorem \ref{Berman} to $Y(t,x)=L([a,a+t],x)$ and  by Lemma \ref{126456}, Lemma \ref{proving-uniform-holder} and (\ref{space holder n}), we obtain for each $m\in\mathbb{N}$ a joint continuous version $Y_m(t,x)$ of $Y(t,x)$ on $[0,T]\times I_{mR}$ which is $\beta$-H\"{o}lder continuous in time uniformly in $x\in I_{mR}$, for all $\beta<1-H$. 

Observe that for $m'>m$, $Y_{m'}(t,x)$ is a continuous extension of $Y_m(t,x)$ from $[0,T]\times I_{mR}$ to $[0,T]\times I_{m'R}$.
Therefore, we can define a continuous version of $Y$ on $[0,T]\times\IR$ by
$$Y(t,x)=Y_m(t,x),\quad\text{for}\ (t,x)\in[0,T]\times I_{mR}.$$
The proof is complete by observing that $Y(t,x)=L([a,a+t],x)$ has compact support in $x\in\IR$ (for each fixed $\omega\in\Omega$).
\end{proof}

Recall the definition of pathwise H\"{o}lder exponent of $L^a(t,x)$ in (\ref{def-pointwise-holder}). We are now ready to state the main result of this section.

\begin{theorem}\label{main 1}
Assume $d=1$ and $1/4<H<1/2$. Let $X$ be the solution to equation (\ref{SDE}). The pathwise H\"{o}lder exponent of $L^a(t,x)$ is given by
\begin{equation*}
\alpha(t)=1-H, \quad \mathrm{a.s.}\quad  \text{for all }t\in[a,T].
\end{equation*}
\end{theorem}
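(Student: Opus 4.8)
The plan is to establish the two bounds $\alpha(t)\ge 1-H$ and $\alpha(t)\le 1-H$ separately, arranged so that both hold on one almost sure event simultaneously for every $t\in[a,T]$.

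For $\alpha(t)\ge 1-H$ there is essentially nothing new to do: it follows from Theorem \ref{uniform-holder}. Fixing a sequence $\beta_k\uparrow 1-H$ and intersecting the corresponding almost sure events, we obtain positive finite random variables $\eta_k,\Delta_k$ with $|L^a(t+h,x)-L^a(t,x)|\le \Delta_k|h|^{\beta_k}$ for all $x\in\IR$, all $t,t+h\in[a,T]$ and $|h|<\eta_k$. Hence for any $\alpha<\beta_k$, $\sup_{x}\delta^{-\alpha}\big(L^a(t+\delta,x)-L^a(t,x)\big)\le \Delta_k\,\delta^{\beta_k-\alpha}\to 0$ as $\delta\downarrow 0$, so $\alpha(t)\ge\beta_k$ for every $k$, i.e.\ $\alpha(t)\ge 1-H$, uniformly in $t$.

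For $\alpha(t)\le 1-H$ I would use a ``pigeonhole'' argument: the entire mass $\delta$ of the occupation measure $L^a([t,t+\delta],\cdot)$ must sit inside the range $\{X_s:s\in[t,t+\delta]\}$, which for $d=1$ is an interval of small length, so the occupation density must be large somewhere. Quantitatively, the global pathwise H\"older estimate used in the proof of Theorem \ref{multi point density} (from \cite[Theorem 3.5]{LO}), $|X_s-X_u|\le c\,|s-u|^{\gamma}(1+\cn_{\gamma+\epsilon,2q}(\mathbf{B})^{1/2q})^{1/\gamma}$ for all $s,u$, involves a single almost surely finite random variable $\cn_{\gamma+\epsilon,2q}(\mathbf{B})$; hence for each $\gamma<H$ there is a random constant $C_\gamma<\infty$, independent of $t$ and $\delta$, such that $\{X_s:s\in[t,t+\delta]\}$ has Lebesgue measure at most $C_\gamma\,\delta^{\gamma}$. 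For the jointly continuous version one has $L^a(t+\delta,x)-L^a(t,x)=L^a([t,t+\delta],x)\ge 0$, $\int_{\IR}L^a([t,t+\delta],x)\,dx=\delta$, and the density vanishes off the range; integrating over the range therefore gives $\delta\le C_\gamma\,\delta^{\gamma}\sup_{x}L^a([t,t+\delta],x)$, i.e.
\[
\sup_{x\in\IR}\big(L^a(t+\delta,x)-L^a(t,x)\big)\ \ge\ C_\gamma^{-1}\,\delta^{\,1-\gamma}.
\]
Now, given any $\alpha>1-H$, choose $\gamma\in(1-\alpha,H)$ (possible precisely because $1-\alpha<H$); then $\sup_x\delta^{-\alpha}\big(L^a(t+\delta,x)-L^a(t,x)\big)\ge C_\gamma^{-1}\delta^{\,1-\gamma-\alpha}\to\infty$ as $\delta\downarrow 0$, so the $\limsup$ in (\ref{def-pointwise-holder}) is not $0$ and $\alpha(t)\le 1-H$. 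Together with the previous step, $\alpha(t)=1-H$ almost surely for all $t\in[a,T]$.

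The step I expect to be the main obstacle is not any single estimate but the uniformity in $t$: the conclusion must hold for \emph{every} $t\in[a,T]$ on one almost sure event. This is why I would lean on the global modulus-of-continuity form of the H\"older bound for $X$ (a single random variable controlling all increments at once) and on the uniform-in-$x$ time regularity of Theorem \ref{uniform-holder}, rather than on fixed-$t$ density bounds; one must also check that the occupation-density identity and the nonnegativity $L^a([t,t+\delta],\cdot)\ge 0$ transfer to the jointly continuous version used throughout, which is immediate since that version is the $L^n(\IP)$-and-a.s.\ limit of the nonnegative mollifications $L^a_\epsilon$.
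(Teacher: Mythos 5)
Your proposal is correct and follows essentially the same route as the paper: the lower bound $\alpha(t)\ge 1-H$ is read off from Theorem \ref{uniform-holder}, and the upper bound comes from the identity $\delta=\int_{\IR}\bigl(L^a(t+\delta,x)-L^a(t,x)\bigr)dx$ together with the pathwise $\gamma$-H\"older bound $|X_u-X_s|\le G_\gamma|u-s|^\gamma$ ($\gamma<H$), yielding $\sup_x\bigl(L^a(t+\delta,x)-L^a(t,x)\bigr)\ge c\,\delta^{1-\gamma}$ and hence blow-up of the ratio for any $\alpha>1-H$. Your extra attention to making both bounds hold on a single almost sure event uniformly in $t$ is a welcome refinement of what the paper states more tersely.
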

\begin{proof}
It follows immediately from Theorem \ref{uniform-holder} that $\alpha_L\ge 1-H$. We show $\alpha_L \le 1-H$ in what follows. Fix any $t>0$. Since the local time vanishes outside the range of $X$, it holds 
\begin{align}
\delta&=\int_{\IR^d} \left(L(t+\delta, x)-L(t,x)\right)dx \nonumber
\\
&\le \sup_{x\in \IR^d}\left(L(t+\delta, x)-L(t, x)\right)\sup_{s,u\in [t, t+\delta]}|X_u-X_s| \nonumber
\\
&\le 2\sup_{x\in \IR^d}\left(L(t+\delta, x)-L(t, x)\right)\sup_{s\in [t, t+\delta]}|X_{t}-X_s|. \label{112346}
\end{align}
\\
\indent Now in order to show $\alpha_L\le 1-H$, it suffices to observe for any $\alpha>0$, by \eqref{112346}, it holds that
\begin{equation*}
 \frac{\sup_{x\in \IR^d}\left(L(t+\delta,x)-L(t, x)\right)}{\delta^\alpha}\ge \frac{\delta^{1-\alpha}}{2\sup_{s\in [t, t+\delta]}|X_{t}-X_s|}.
\end{equation*}
On the other hand, by the construction of rough integrals, we have, for any $\gamma<H$, a finite random variable $G_\gamma$ such that for all $s,t\in[0,T]$,
$$|X_t-X_s|\leq G_\gamma |t-s|^\gamma.$$
Hence for all $\gamma<H$,
$$ \frac{\sup_{x\in \IR^d}\left(L(t+\delta,x)-L(t, x)\right)}{\delta^\alpha}\ge \frac{\delta^{1-\alpha}}{G_\gamma\delta^\gamma}=\frac{\delta^{1-\alpha-\gamma}}{G_\gamma}.$$
For any $\alpha>1-H$, we can always find a $\gamma<H$ such that the right hand side above tends to infinity as $\delta\downarrow0$, which implies $\alpha(t)\leq 1-H$.  The proof is complete.
\end{proof}




\end{document}